\newtheorem{theorem}{Theorem}[section]
\newtheorem{prop}[theorem]{Proposition}
\newtheorem{lemma}[theorem]{Lemma}
\newcommand\beq{\begin{equation}}
\newcommand\eeq{\end{equation}}
\newcommand\bce{\begin{center}}
\newcommand\ece{\end{center}}
\newcommand\bea{\begin{eqnarray}}
\newcommand\eea{\end{eqnarray}}
\newcommand\bean{\begin{eqnarray*}}
\newcommand\eean{\end{eqnarray*}}
\newcommand\ben{\begin{enumerate}}
\newcommand\een{\end{enumerate}}
\newcommand\bit{\begin{itemize}}
\newcommand\eit{\end{itemize}}
\newcommand\brr{\begin{array}}
\newcommand\err{\end{array}}
\newcommand\bt{\begin{tabular}}
\newcommand\et{\end{tabular}}
\newcommand\nn{\nonumber}
\newcommand\ms{\medskip}
\renewcommand\S{\mathfrak S}
\newcommand\D{\mathcal D}
\newcommand\M{\mathcal M}
\newcommand\wh{\widehat}
\newcommand\wt{\widetilde}
\newcommand\RS{\mathcal {RS}}
\newcommand\bij{\varphi}
\newcommand\bijr{\psi}
\newcommand\red{\rho}
\newcommand\krar{\Phi}
\newcommand\T{\mathcal T}
\newcommand\udt{\dot{\,}}
\newcommand\A{\mathcal A}
\author{Emeric Deutsch~\thanks{Polytechnic Institute of New York University, Brooklyn, NY 11201.}
\and
Sergi Elizalde~\thanks{Department of Mathematics,
Dartmouth College, Hanover, NH 03755.}} 
\title{Restricted simsun permutations}
\date{}
\begin{document}
\maketitle

\hfill {\it To the memory of Rodica Simion}\ms

\begin{abstract}
A permutation is simsun if for all $k$, the subword of the one-line notation consisting of the $k$ smallest entries does not have three consecutive decreasing elements.
Simsun permutations were introduced by Simion and Sundaram, who showed that they are counted by the Euler numbers.
In this paper we enumerate simsun permutations avoiding a pattern or a set of patterns of length $3$.
The results involve Motkzin, Fibonacci, and secondary structure numbers. The techniques in the proofs include generating functions, bijections into lattice paths and generating trees.
\end{abstract}

\section{Introduction}

\subsection{Simsun permutations}

A permutation $\pi=\pi_1\pi_2\dots\pi_n\in\S_n$ is called {\em simsun} if for all $3\le k\le n$, the restriction of $\pi$ to $\{1,2,\dots,k\}$ has no double descents, that is, there are no three consecutive entries in
decreasing order. For example, $41325$ is simsun and $32415$ is not. Simsun permutations are named after Rodica
{\it Sim}ion and Sheila {\it Sun}daram~\cite{Sun}. We denote by $\RS_n$ the set of simsun permutations in $\S_n$. Simion and Sundaram proved that simsun permutations are enumerated by the Euler numbers.
To state their result, we denote by $E_n$ the $n$-th Euler number, which is known to count permutations $\pi\in\S_n$ with $\pi_1<\pi_2>\pi_3<\pi_4>\dots$, usually called up-down permutations.

\begin{theorem}\label{th:rs} $$|\RS_n|=E_{n+1}.$$
\end{theorem}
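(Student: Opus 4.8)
The plan is to build simsun permutations recursively by inserting the largest entry and to track the number of descents, which yields a two-variable recurrence whose row sums I will identify with the Euler numbers. The first step is an insertion lemma. Given $\pi\in\RS_{n-1}$, consider the $n$ gaps at which the new largest entry $n$ can be placed. Writing the local picture around a gap as $a\,n\,b$, inserting $n$ always turns the adjacency $a$-$b$ into an ascent $a$-$n$ followed by a descent $n$-$b$; since $n$ is maximal, the only way to create a forbidden double descent is for $b$ itself to be the top of a descent, i.e.\ for the new word to contain $n\,b\,c$ with $b>c$. Hence the admissible gaps are exactly those whose right neighbour $b$ does not begin a descent, and because $\pi$ has no double descent (being simsun, its restriction to all of $\{1,\dots,n-1\}$ is double-descent-free), every descent $a$-$b$ of $\pi$ automatically satisfies $a>b<c$, so the gap inside each descent is admissible. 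Counting, a permutation with $d$ descents has exactly $n-d$ admissible gaps.

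Next I would refine by descents. Let $R_{n,k}$ denote the number of $\pi\in\RS_n$ with $k$ descents. Inserting $n$ at the gap inside an existing descent destroys one descent and creates one (net change $0$), and inserting at the very end changes nothing; the remaining admissible gaps sit before an ascent-top or at the front and raise the descent number by one. This trichotomy produces $d+1$ children with $d$ descents and $n-2d-1$ children with $d+1$ descents, hence
\[
R_{n,k}=(k+1)\,R_{n-1,k}+(n-2k+1)\,R_{n-1,k-1}.
\]
One checks the first rows $1;\,1,1;\,1,4;\,1,11,4;\,1,26,34$, with row sums $1,2,5,16,61$, matching the Euler numbers.

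Finally I would show $\sum_k R_{n,k}=E_{n+1}$. Setting $R_n(t)=\sum_k R_{n,k}t^k$ and $F(x,t)=\sum_{n\ge0}R_n(t)\,x^n/n!$, the recurrence translates into the first-order linear PDE
\[
(1-tx)\,F_x=F+t(1-2t)\,F_t,\qquad F(0,t)=1.
\]
Solving this by the method of characteristics and specializing to $t=1$ should give $F(x,1)=\sec^2 x+\sec x\tan x=\frac{d}{dx}(\sec x+\tan x)$, which is exactly the exponential generating function of $E_{n+1}$; reading off coefficients yields $|\RS_n|=R_n(1)=E_{n+1}$. A more combinatorial ending would instead match the displayed recurrence with the classical Entringer/boustrophedon recurrence for up-down permutations, or construct a direct bijection from $\RS_n$ onto the up-down permutations of length $n+1$. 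I expect the insertion lemma and the recurrence to be routine; the main obstacle is this last step — either integrating the characteristic system and verifying the $\sec+\tan$ identity, or producing a clean bijection — since it is here that the specific arithmetic of the Euler numbers must actually be extracted.
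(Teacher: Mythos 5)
Your first two steps are sound. The insertion lemma is exactly the observation the paper records when it builds the generating tree $\T$: the fertility positions of $\pi\in\RS_{n-1}$ are all $n$ gaps except the $d$ that immediately precede a descent, so there are $n-d$ of them; and your classification of which insertions preserve the descent number (the $d$ gaps inside descents plus the terminal gap) and which raise it by one is correct. Together with the converse statement that deleting $n$ from a member of $\RS_n$ leaves a member of $\RS_{n-1}$ --- trivial, but it is what makes the insertion process exhaustive, and you should say it --- this yields a valid proof of the recurrence $R_{n,k}=(k+1)R_{n-1,k}+(n-2k+1)R_{n-1,k-1}$, and your translation of it into the PDE $(1-tx)F_x=F+t(1-2t)F_t$ checks out.

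The genuine gap is the final step, which you flag yourself but which is precisely where the content of the theorem lives: nothing in the write-up proves $\sum_k R_{n,k}=E_{n+1}$. Verifying five rows is evidence, not proof, and the step is less routine than ``specialize to $t=1$'' suggests: putting $t=1$ into the PDE gives $(1-x)F_x(x,1)=F(x,1)-F_t(x,1)$, which still contains the unknown $F_t(x,1)$, so you must actually integrate the characteristic system (equivalently, derive the closed form for $\sum_n R_n(t)x^n/n!$, which is the main result of Chow and Shiu \cite{CS}) before anything can be read off at $t=1$. The alternative endings you sketch are likewise not one-liners: your recurrence is not the Entringer recurrence $T(n,k)=T(n,k-1)+T(n-1,n-k)$, so some nontrivial translation is needed, and no bijection to up-down permutations is constructed --- indeed the paper explicitly warns that the naive one cannot work, since up-down permutations are not closed under deleting $n$. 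For comparison, the paper does not prove Theorem~\ref{th:rs} either; it cites Simion and Sundaram \cite{Sun} and records only the deletion property and the description of fertility positions, i.e.\ exactly the two facts you did establish. What your proposal is missing is the same thing the paper outsources: the identification of the resulting tree or recurrence with the Euler numbers, and that identification is the theorem.
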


The proof of the above result uses an important property of simsun permutations: deleting $n$ from $\pi\in\RS_n$ results in a permutation in $\RS_{n-1}$. Note that the analogous property for
up-down permutations does not hold.

This fact can be used to build a generating tree $\T$ for simsun permutations as follows.
For each permutation in $\RS_n$, its {\em fertility} positions are those spots between adjacent entries or at the beginning or at the end of the permutation
where by inserting $n+1$ we obtain a permutation in $\RS_{n+1}$.
The nodes at level $n$ of the tree are the simsun permutations of length $n$, and the children of a node $\pi\in\RS_n$ are the permutations obtained by inserting
$n+1$ in a fertility position of $\pi$. The root of the tree is the permutation $1$. The above property implies
that this tree contains all simsun permutations.
It is clear from the definition that the fertility positions of $\pi\in\RS_n$ are all the spots except those immediately preceding a descent of $\pi$.

Simsun permutations, which are a variant of {\em Andr\'e permutations}~\cite{FS},
have an interesting connection with the $cd$-index of $\S_n$. The coefficients of the monomials of the $cd$-index of $\S_n$
are equal to the number of simsun permutations in $\S_{n-1}$ with a given descent set (see~\cite{Het,HR} for details).
More recently, Chow and Shiu \cite{CS} have enumerated simsun permutations with respect to the number of descents.

In this paper we study simsun permutations that avoid patterns of length 3. We say that $\pi\in\S_n$ {\it avoids} $\sigma\in\S_k$ if there are no indices $i_1<i_2<\dots<i_k$ such that
$\red(\pi_{i_1}\pi_{i_2}\dots\pi_{i_k})=\sigma$, where $\red$ is the reduction consisting of relabeling the smallest element with 1, the second smallest with 2, and so on.
We denote by $\S_n(\sigma)$ the set of $\sigma$-avoiding permutations in $\S_n$, and by $\RS_n(\sigma)$ the set of $\sigma$-avoiding permutations in $\RS_n$.
We define $\S_n(\sigma,\tau,\dots)$ and $\RS_n(\sigma,\tau,\dots)$ similarly by requiring the permutations to avoid several patterns at the same time.
In~\cite{SS}, Simion and Schmidt enumerated permutations avoiding any subset of the set of six patterns of length~3.

A more general version of pattern avoidance, introduced in~\cite{BS}, allows the requirement that some entries have to be adjacent in an occurrence of the pattern
in a permutation. In order to be consistent with the above notation for classical pattern avoidance, we will indicate the positions of the pattern
that are required to be adjacent by putting a hat over them. For example, $\pi$ avoids the generalized pattern $\wh{53}4\wh{162}$ if there are no
indices $i+1<j<k$ such that $\pi_k<\pi_{k+2}<\pi_{i+1}<\pi_j<\pi_i<\pi_{k+1}$.

An equivalent formulation of the simsun condition is that there do not exist indices $a<b<c$ with $\pi_a>\pi_b>\pi_c$ and so that
$\pi_i>\pi_a$ for all $a<i<b$ and all $b<i<c$. Consequently, we have that \beq\label{eq:inclRS}\S_n(\wh{32}1)\subseteq\RS_n\subseteq\S_n(\wh{321}).\eeq Indeed, if such indices exist, then $\pi_{b-1}\pi_b\pi_c$ is an occurrence of
$\wh{32}1$, which proves the first inclusion. The second inclusion is clear since an occurrence of $\wh{321}$ is a double descent, which is not allowed in a simsun permutation. Both inclusions are strict when $n\ge4$.

\subsection{Sequences and lattice paths}\label{sec:background}

In Table~\ref{tab:seq} we set the notation for and give the definition of some
of the sequences of integers that will appear in the paper.
\begin{table}[htb]
\begin{tabular}{c|c|c|c}
Notation & Name & Generating function & OEIS~\cite{OEIS}\\
\hline
$C_n$ & Catalan & $\sum_{n\ge0} C_n z^n= \frac{1-\sqrt{1-4z}}{2z}$ &  A000108\\
$M_n$ & Motkzin & $\sum_{n\ge0} M_n z^n = \frac{1-z-\sqrt{1-2z-3z^2}}{2z^2}$ & A001006 \\
$S_n$ & Secondary structure & $\sum_{n\ge0} S_n z^n = \frac{1-z-z^2-\sqrt{1-2z-z^2-2z^3+z^4}}{2z^3}$  & A004148\\
$F_n$ & Fibonacci & $\sum_{n\ge0} F_n z^n =\frac{z}{1-z-z^2}$ & A000045\\
$E_n$ & Euler & $\sum_{n\ge0} E_n\frac{z^n}{n!}=\sec z+\tan z$ & A000111
\end{tabular}
\caption{\label{tab:seq} Some important sequences.}
\end{table}

Recall that a Dyck paths of semilength $n$ is a lattice path from $(0,0)$ to $(2n,0)$ with steps $U=(1,1)$ and $D=(1,-1)$ that never goes below the $x$-axis. The set of Dyck paths of semilength $n$ is denoted by $\D_n$, and
its cardinality is $C_n$.
A Motzkin path of length $n$ is a lattice paths from $(0,0)$ to $(n,0)$ with steps $U=(1,1)$, $D=(1,-1)$ and $H=(1,0)$ that never goes below the $x$-axis. The set of Motzkin paths of length $n$ is denoted $\M_n$,
and its cardinality is $M_n$.

We use $S_n$ to denote the number of secondary structures with $n$ vertices, which can be defined as
noncrossing matchings with no arcs between adjacent vertices (a slightly different definition is given
in~\cite[p.~241]{EC2}).
The name for these structures, which are in simple bijection with peakless Motzkin paths,
is due to the fact that they model secondary structures of RNA molecules~\cite{SW}.
The secondary structure numbers $S_n$
are also known to count the number of Dyck paths of semilength $n$ with no $UUU$ and no $DDD$, and also the
number of Dyck paths of semilength $n+2$ with no $UDU$ and no $DUD$, as shown in~\cite{Don} and in Subsection~\ref{sec:Dyck} below.
A {\em peak} in a Dyck path is an occurrence of $UD$. We will use the term {\em ascent} (resp. {\em descent}) to refer to a maximal consecutive substring of $U$ (resp. $D$) steps.

When we deal with sequences of sets indexed by $n$, such as $\D_n$ or $\S_n$, we will omit the subscript $n$ to denote the union of all the sets in the sequence. For example, $\M=\bigcup_{n\ge0}\M_n$ and
$\RS(\sigma)=\bigcup_{n\ge0}\RS_n(\sigma)$.

\subsection{Bijection to Dyck paths}\label{sec:krar}

We will use a well-known bijection between $132$-avoiding permutations and Dyck paths due to Krattenthaler~\cite{Kra}.
The following graphical description, up to rotation of the diagram by $90$ degrees, is taken from~\cite{EliPak}.
Any permutation $\pi\in\S_n$ can be represented as an $n\times n$ array
with crosses in positions $(i,\pi_i)$, where the first coordinate is the column (increasing from left to right) and the second coordinate is the row (increasing from bottom to top).
Given the array of $\pi\in\S_n(132)$, consider the path from the upper-left corner to the lower-right corner
with steps south and east that leaves all the crosses to its right and stays always as close to the diagonal connecting these two corners as possible.
Replacing each south step with a $U$ and each east step with a $D$ produces a Dyck path $\krar_{132}(\pi)\in\D_n$.

The map $\krar_{132}$ is a bijection, and its inverse map can be described as follows.
Given a Dyck path of semilength $n$, replace each $U$ with a south step and each $D$ with an east step to obtain a path from the
upper-left corner to the lower-right corner of an $n\times n$ array.
To recover the permutation, in each row from $1$ to $n$ put a
cross as far to the left as possible with the conditions that it stays to the right of the path and that no other cross has been placed in that column.
Note that the peaks of $\krar_{132}(\pi)$ occur at the left-to-right minima of $\pi$.

One can define similar bijections $\krar_{231}$, $\krar_{312}$ and $\krar_{213}$ from $\S_n(231)$, $\S_n(312)$ and $\S_n(213)$ to $\D_n$, respectively, by considering paths between the
appropriate corners of the array, as shown in Figure~\ref{fig:krar}.

\begin{figure}[hbt]
\centering
\epsfig{file=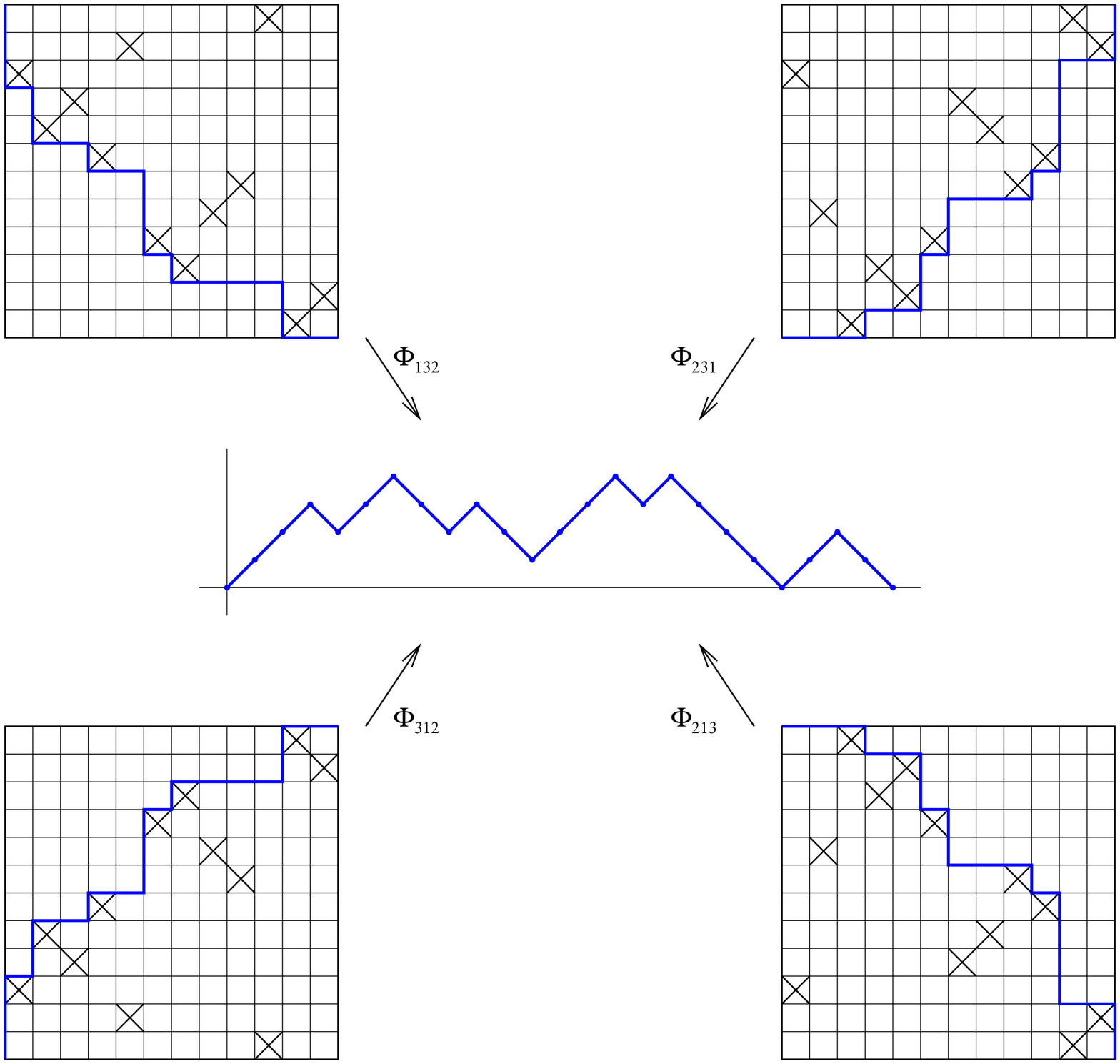,width=5in}
\caption{\label{fig:krar} The bijections $\krar_{132}(10\,8\,9\,7\,11\,4\,3\,5\,6\,12\,1\,2)$, $\krar_{231}(10\,5\,1\,3\,2\,4\,9\,8\,6\,7\,12\,11)$,
$\krar_{312}(3\,5\,4\,6\,2\,9\,10\,8\,7\,1\,12\,11)$ and $\krar_{213}(3\,8\,12\,10\,11\,9\,4\,5\,7\,6\,1\,2)$.}
\end{figure}

It is a commonly used fact that a permutation $\pi\in\S_n(132)$ with $n\ge1$ can be uniquely decomposed as $\pi=\sigma n\tau$, where all the entries in $\sigma$ are larger than all the entries in $\tau$
(we denote this by $\sigma\gg\tau$),
$\sigma\in\S(132)$, and $\red(\tau)\in\S(132)$. This yields the equation $C(z)=1+zC(z)^2$ for the generating function $C(z)$ of $132$-avoiding permutations, from where $C(z)=\frac{1-\sqrt{1-4z}}{2z}$.
We will use a similar decomposition for simsun permutations avoiding some patterns of length~3.

\subsection{Structure of the paper}

In the next five sections we enumerate simsun permutations avoiding each one of the patterns $123$, $132$, $213$, $231$ and $312$.
The case of $321$-avoiding simsun permutations is trivial, since $\RS_n(321)=\S_n(321)$ and it is well known~\cite{SS} that $|\S_n(321)|=C_n$.
In Section~\ref{sec:double} we enumerate simsun permutations avoiding each pair of patterns of length $3$. Table~\ref{tab:1or2} summarizes our results on the enumeration of simsun permutations with one and two restrictions.
To our knowledge, the case of $132$-avoiding simsun permutations is the first occurrence of the secondary structure numbers in the enumeration of permutations.

\begin{table}[htb]
$$\begin{array}{|c|c|}
\hline  \sigma & |\RS_n(\sigma)| \\ \hline
  123  & 6 \mbox{ \,(for }n\ge4) \\
  132  & S_n\\
  213  & M_n\\
  231  & M_n\\
  312  & 2^{n-1}\\
  321  & C_n \\ \hline
\end{array}\hspace{15mm}
\begin{array}{|c|c|}
\hline  \{\sigma,\tau\} & |\RS_n(\sigma,\tau)| \\ \hline
  \{123,132\}  & 2 \mbox{ \,(for }n\ge4)\\
  \{123,213\}  & 3 \mbox{ \,(for }n\ge3)\\
  \{123,231\},\{123,312\},\{123,321\}  & 0 \mbox{ \,(for }n\ge5)\\
  \{132,213\},\{213,231\},\{231,312\}  & F_{n+1}\\
  \{132,231\},\{132,312\},\{213,312\} & n\\
  \{132,321\},\{213,321\}  & \frac{1}{2}(n^2-n+2)\\
  \{231,321\},\{312,321\}  & 2^{n-1} \\ \hline
\end{array}
$$
\caption{\label{tab:1or2} The number of simsun permutations avoiding one or two patterns of length 3.}
\end{table}

For simsun permutations avoiding three or more patterns, the enumeration is
either trivial or follows easily from the results given here and in~\cite{SS}.
For completeness, we include the results in Table~\ref{tab:3up}.

\begin{table}[htb]
$$\begin{array}{|c|c|}
\hline  \Sigma & |\RS_n(\Sigma)| \\ \hline
  \{123,132,213\},\{132,213,231\},\{132,213,312\},\{132,231,312\},\{213,231,312\}  & 2 \\
  \{132,213,321\},\{132,231,321\},\{132,312,321\},\{213,231,321\},\{213,312,321\}  & n \\
  \{231,312,321\}  & F_{n+1}\\
  \mbox{all other sets of three patterns} & 0 \\ \hline
  \{132,213,231,312\} & 1 \\
  \{132,213,231,321\},\{132,213,312,321\},\{132,231,312,321\},\{213,231,312,321\}  & 2 \\
  \mbox{all other sets of four patterns} & 0 \\ \hline
  \{132,213,231,312,321\} & 1 \\
  \mbox{all other sets of five patterns} & 0 \\ \hline
\end{array}
$$
\caption{\label{tab:3up} The number of simsun permutations avoiding three or more patterns of length~3.}
\end{table}

\section{Avoiding $123$}

This is the easiest case of simsun permutations avoiding one pattern of length 3. Consider the subtree of $\T$ consisting of $123$-avoiding simsun permutations. The first four levels of this tree not including the
empty permutation are drawn in Figure~\ref{fig:RS123}.
For the remaining levels we have the following result.

\begin{figure}[htb]
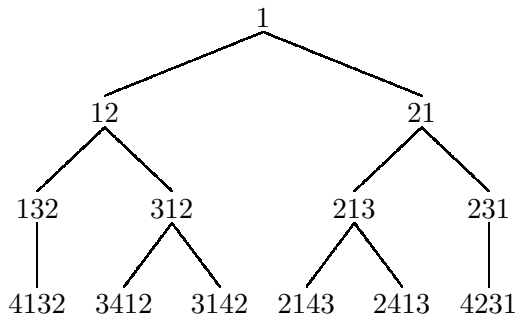

\begin{center}\synttree
[$1$ [$12$ [ $132$ [$4132$]] [$312$ [$3412$] [$3142$]]] [$21$ [$213$ [$2143$][$2413$]] [$231$ [$4231$]]]]
\end{center}
\caption{The first four levels of the generating tree for $\RS(123)$. \label{fig:RS123}}
\end{figure}

\begin{prop}\label{prop:RS123}
For $n\ge4$, $|\RS_n(123)|=6$.
\end{prop}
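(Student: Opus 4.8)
The plan is to study the subtree of $\T$ formed by the $123$-avoiding simsun permutations and to prove that every node at level $n\ge4$ has exactly one child in it. Since removing the largest entry of a permutation in $\RS_{n+1}(123)$ yields a permutation that is still simsun and still avoids $123$, this subtree contains all of $\RS(123)$ and each non-root node has a unique parent; consequently the number of nodes is the same at every level $n\ge4$, and the four levels shown in Figure~\ref{fig:RS123} give the base case $|\RS_4(123)|=6$. It therefore suffices to establish the one-child property.

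Fix $\pi\in\RS_n(123)$ with $n\ge4$, and let $k$ be the length of its initial decreasing run, so that $\pi_1>\cdots>\pi_k$. First I would determine which insertions of $n+1$ stay inside $\RS(123)$. Because $\pi$ already avoids $123$ and $n+1$ is the largest letter, inserting it just after $\pi_p$ produces a new occurrence of $123$ precisely when $\pi_1\cdots\pi_p$ is not decreasing; hence the $123$-avoiding insertions are exactly those at the spots $p=0,1,\dots,k$. On the other hand, the simsun fertility rule forbids exactly the spots immediately preceding a descent, i.e.\ spot $p$ is fertile unless position $p+1$ is a descent of $\pi$. A spot thus yields a child in the subtree if and only if $0\le p\le k$ and $\pi_{p+1}<\pi_{p+2}$.

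The two structural facts that close the argument are that $k\le2$, since a simsun permutation has no double descent, and that $\pi_{k+1}>\pi_{k+2}$, since $\pi_k<\pi_{k+1}$ (the run ends at position $k<n$) would otherwise combine with $\pi_{k+1}<\pi_{k+2}$ to give the forbidden pattern $\pi_k<\pi_{k+1}<\pi_{k+2}$; note these entries exist because $k\le2\le n-2$. A direct check of the two cases then leaves a single admissible spot: when $k=2$ the spots $0$ and $2$ sit immediately before the descents at positions $1$ and $3$ while spot $1$ does not, and when $k=1$ spot $1$ sits before the descent at position $2$ while spot $0$ does not. In both cases exactly one spot is simultaneously fertile and at most $k$, so $\pi$ has a unique child and the induction runs. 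I do not anticipate a genuine obstacle; the only care needed is in matching the ``immediately preceding a descent'' description against the spot indices, and in noting that no extra invariant on the children is required, since the one-child property follows for every element of $\RS_n(123)$ straight from the two defining conditions, simsun and $123$-avoidance.
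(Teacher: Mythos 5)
Your proposal is correct and follows essentially the same route as the paper: both arguments work in the generating tree $\T$ restricted to $\RS(123)$ and show that for $n\ge4$ each node has exactly one child, by combining the simsun constraint (the spot must not precede a descent) with the $123$ constraint (the spot must have no ascent to its left, i.e.\ must lie within the initial decreasing run, which has length at most $2$). Your explicit case analysis on $k\in\{1,2\}$ is just a more detailed bookkeeping of the paper's observation that the unique such spot is the one immediately preceding the first ascent.
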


\begin{proof}
We claim that for $n\ge4$, each permutation $\pi\in\RS_n(123)$ has a unique fertility position where the insertion of $n+1$ produces a permutation in $\RS_{n+1}(123)$.
Indeed, we know that such a position cannot immediately precede a descent, so it must either precede an ascent or be one the two rightmost spots. On the other hand, such a fertility position
cannot have an ascent of $\pi$ to its left, otherwise the insertion of $n+1$ would create a $123$ pattern. Since all simsun permutations of length $n\ge3$ have an ascent within the first $3$ entries, the only fertility
position of $\pi\in\RS_n(123)$ followed by an ascent and with no ascents to its left is the position immediately preceding the first ascent of $\pi$.
\end{proof}

\section{Avoiding $132$}

Let $R(z)=\sum_{n\ge0} |\RS_n(132)| z^n$. Simsun permutations avoiding $132$ are counted by the secondary structure numbers.
We give three different proofs of this fact.

\begin{theorem}
$$|\RS_n(132)|=S_n.$$
\end{theorem}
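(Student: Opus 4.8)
The plan is to enumerate $\RS(132)$ by reusing the first-return decomposition of $132$-avoiding permutations recalled above and upgrading it to a functional equation for $R(z)$ by tracking exactly how the simsun condition survives the decomposition. Write a nonempty $\pi\in\S_n(132)$ as $\pi=\sigma\,n\,\tau$ with $\sigma\gg\tau$ and $\red(\sigma),\red(\tau)\in\S(132)$. Since the entries of $\tau$ are precisely the smallest $|\tau|$ values of $\pi$, the restriction of $\pi$ to $\{1,\dots,k\}$ with $k\le|\tau|$ sees only $\tau$, so $\red(\tau)$ must itself be simsun. As $k$ grows past $|\tau|$, the restriction becomes a block of large entries coming from $\sigma$ (in their order in $\sigma$) followed by all of $\tau$, and the only new double descents that can arise are those straddling the $\sigma/\tau$ boundary.

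First I would read off these boundary constraints. Because every retained entry of $\sigma$ exceeds every entry of $\tau$, the step from the $\sigma$-block into $\tau$ is always a descent, and this forces two conditions. Inspecting the triple (last two retained entries of $\sigma$, first entry of $\tau$) shows that, whenever $\tau\neq\emptyset$, the restriction of $\sigma$ to its smallest $j$ entries must never end in a descent, for every $j$; equivalently, appending a new global minimum at the end of $\sigma$ leaves it simsun. Call such $\sigma$ \emph{right-extendable}. Inspecting the triple (a retained entry of $\sigma$, or $n$ when $\sigma=\emptyset$, and the first two entries of $\tau$) shows that $\red(\tau)$ must not begin with a descent. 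One then checks that these two conditions, together with $\sigma$ and $\tau$ being simsun, are also sufficient.

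This bookkeeping forces auxiliary series. Let $B(z)$ count right-extendable members of $\RS(132)$, let a tilde denote the further restriction to permutations not beginning with a descent, and take $R,\,B$ and their tilde versions as four unknowns. Splitting the decomposition according to whether $\tau$ is empty yields $R=1+zR+zB(\wt R-1)$, and parallel case analyses (peeling off $n$ and recording whether the first entry of $\pi$ comes from $\sigma$ or equals $n$) give the remaining three equations of a closed polynomial system. I would then eliminate the auxiliaries and verify that $R$ satisfies the secondary-structure relation $z^3R^2-(1-z-z^2)R+1=0$, i.e. $R=1+(z+z^2)R+z^3R^2$; comparison with the generating function in Table~\ref{tab:seq} gives $R(z)=\sum_n S_n z^n$ and hence $|\RS_n(132)|=S_n$. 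As a sanity check the system should reproduce $1,1,2,4$ for $n=0,\dots,3$, which one can confirm by hand, including the way right-extendability is exactly what excludes $321=3\cdot 21$.

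The main obstacle is the boundary analysis of the second paragraph: isolating the precise pair of conditions (right-extendability of $\sigma$ and the no-initial-descent condition on $\tau$) and proving their sufficiency, since the simsun property is genuinely global and must be verified across all restrictions simultaneously. Everything after that is routine elimination. As an independent cross-check I would run the argument through the bijection $\krar_{132}$: the decomposition corresponds to $\krar_{132}(\sigma n\tau)=U\,\krar_{132}(\sigma)\,D\,\krar_{132}(\red(\tau))$, the two boundary conditions translate into local constraints on the ascents and descents of the Dyck path, and the resulting path class can be matched with one of the $S_n$-counted families of Dyck paths from Subsection~\ref{sec:Dyck}.
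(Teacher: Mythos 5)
Your proposal is correct and follows essentially the same route as the paper's second proof: the decomposition $\pi=\sigma n\tau$ with $\sigma\gg\tau$, the same two boundary conditions (your ``right-extendable'' $\sigma$ is equivalent, under $132$-avoidance, to the smallest entry of $\sigma$ sitting immediately left of the second smallest, and your no-initial-descent condition on $\tau$ forces $\tau_2=\tau_1+1$), and the resulting algebraic equation $R=1+(z+z^2)R+z^3R^2$. The only difference is bookkeeping: the paper observes that each restricted class is in bijection with $\RS(132)$ shifted by one length (giving the factors $1+zR$ and $zR$ directly and a single equation), whereas you carry $B$ and the tilde series as unknowns and eliminate them.
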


\begin{proof}[First proof (generating tree)]
Consider the subtree $\T_{132}$ of $\T$ consisting of $132$-avoiding simsun permutations. In the rest of this proof, {\it fertility positions} of $\pi\in\RS_n(132)$
will refer only to those where the insertion of $n+1$
produces a permutation in $\RS_{n+1}(132)$. Note that the fertility positions are those that do not precede a descent and such that all the entries to their left are larger than all the entries to their right.
To each $\pi\in\RS_n(132)$, assign the following label to the corresponding node in $\T_{132}$:
\bit\item $(k)$ \ if $\pi$ has $k$ fertility positions and either $\pi_1<\pi_2$ or $n=1$;
\item $\wt{(k)}$  \ if $\pi$ has $k$ fertility positions and $\pi_1=n>1$;
\item $\wh{(k)}$  \ if $\pi$ has $k$ fertility positions and $\pi_2<\pi_1<n$.
\eit
Then the labels of $\T_{132}$ obey the following {\em colored} succession rules (using the terminology from~\cite{FPPR}):
\bean (k) &\longrightarrow& \wt{(k)}(k)(k-1)\dots(2),\\ \wt{(k)}&\longrightarrow&(k+1)\wh{(k-1)}\wh{(k-2)}\dots\wh{(1)},\\ \wh{(k)}&\longrightarrow&\wh{(k)}\wh{(k-1)}\dots\wh{(1)},\eean
with $(2)$ being the root, corresponding to the permutation of length 1.
The succession rules are obtained using the following facts:\ben\renewcommand{\labelenumi}{(\roman{enumi})}
\item after an insertion, the fertile positions to the right of where the insertion took place remain fertile, including the spot right next to the inserted entry;
\item the leftmost position is only fertile when $\pi_1<\pi_2$ or $n=1$, and if the insertion takes places there, then the new leftmost position is no longer fertile;
\item after an insertion in any position other than the leftmost, all the positions to the left of where the insertion took place become infertile, except the leftmost position, which remains fertile if it already was,
and becomes fertile if the insertion takes place between $\pi_1=n$ and $\pi_2$.
\een
The first four levels of $\T_{132}$ and their labels are drawn in Figure~\ref{fig:gentree132}.

\begin{figure}[htb]
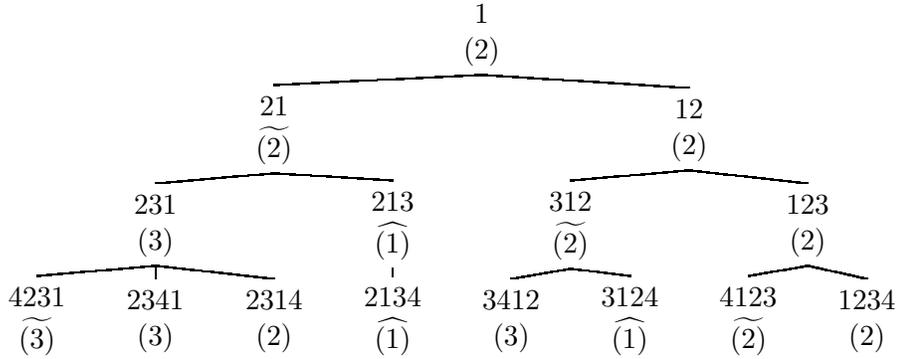

\begin{center}\synttree
[\bt{c}$1$\\$(2)$\et  [\bt{c}$21$\\ $\wt{(2)}$\et [\bt{c}$231$\\ $(3)$\et [\bt{c}$4231$\\ $\wt{(3)}$\et] [\bt{c}$2341$\\ $(3)$\et] [\bt{c}$2314$\\ $(2)$\et]] [\bt{c}$213$\\ $\wh{(1)}$\et [\bt{c}$2134$\\ $\wh{(1)}$\et]]] [\bt{c}$12$\\ $(2)$\et [\bt{c}$312$\\ $\wt{(2)}$\et [\bt{c}$3412$\\ $(3)$\et][\bt{c}$3124$\\ $\wh{(1)}$\et]] [\bt{c}$123$\\ $(2)$\et [\bt{c}$4123$\\ $\wt{(2)}$\et][\bt{c}$1234$\\ $(2)$\et]]]]
\end{center}
\caption{The first four levels of the generating tree $\T_{132}$ with the corresponding labels. \label{fig:gentree132}}
\end{figure}

Now let $A(u,z)$, $\wt{A}(u,z)$ and $\wh{A}(u,z)$ be the generating functions where the coefficient of $u^kz^n$ is the number of nodes at level $n$ (the root being at level $1$) with label $(k)$, $\wt{(k)}$ and $\wh{(k)}$, respectively.
Then the succession rules above translate into the system of equations
\bean A(u,z)&=&u^2z+\frac{uz}{u-1}(A(u,z)-uA(1,z))+uz\wt{A}(u,z), \\ \wt{A}(u,z)&=&zA(u,z), \\ \wh{A}(u,z)&=&\frac{z}{u-1}(\wt{A}(u,z)-u\wt{A}(1,z))+\frac{uz}{u-1}(\wh{A}(u,z)-\wh{A}(1,z)).\eean
These functional equations can be solved by routine use of the kernel method
(see, for example, \cite{BBDFG}).
Indeed, substituting the second one into the first, the kernel multiplying $A(u,z)$ is $$1-\frac{uz}{u-1}-uz^2,$$ which is canceled setting
$$u=\frac{1-z+z^2-\sqrt{1-2z-z^2-2z^3+z^4}}{2z^2};$$
then we can solve first for $A(1,z)$ and then for $A(u,z)$. Substituting the resulting expression for $\wt{A}(u,z)$ into the third equation and canceling the kernel $1-uz/(u-1)$ by setting $u=1/(1-z)$, we obtain $\wh{A}(1,z)$.
The generating function for all the nodes of $\T_{132}$ is then
$$R(z)=1+A(1,z)+\wt{A}(1,z)+\wh{A}(1,z)=\frac{1-z-z^2-\sqrt{1-2z-z^2-2z^3+z^4}}{2z^3}.$$
\end{proof}

\begin{proof}[Second proof (permutation decomposition)]
Let $\pi\in\RS_n(132)$ with $n\ge1$. We can write $\pi=\sigma n\tau$, where $\sigma\gg\tau$.
If $\tau$ is empty, then $\sigma$ is an arbitrary permutation in $\RS_{n-1}(132)$.

Assume now that $\tau$ is nonempty.
Then either $\sigma$ has length $0$ or $1$, or the smallest entry in $\sigma$ appears immediately to the left of its second smallest entry. Indeed,
the fact that it appears to the left follows from the simsun condition; otherwise, by deleting all entries larger than the two smallest entries of $\sigma$, these two and the first entry of $\tau$ would form a double descent.
The fact that there are no other entries in between follows from the $132$-avoiding condition. Deleting the
smallest entry of $\sigma$ and reducing its remaining entries we obtain an arbitrary $132$-avoiding simsun permutation.

Additionally, either $\tau$ has length 1 or $\tau_2=\tau_1+1$. Indeed, if $\tau_1>\tau_2$ then $\pi$ would have a double descent $n\tau_1\tau_2$, and if $\tau_2>\tau_1+1$ then $\tau$ would contain $132$.
Deleting $\tau_1$ and reducing the remaining entries of $\tau$ we obtain again an arbitrary $132$-avoiding simsun permutation.
Finally, note that these conditions in $\sigma$ and $\tau$ are sufficient to guarantee that $\pi=\sigma n\tau\in\RS_n(132)$.

It follows from this decomposition that the generating function $R(z)$ satisfies the equation
\beq\label{eq:R}R(z)=1+zR(z)+z(1+zR(z))zR(z)=1+zR(z)+z^2R(z)+z^3R(z)^2.\eeq
The summand $zR(z)$ corresponds to the case when $\tau$ is empty. The factor $(1+zR(z))$ records that $\red(\sigma)$ is either empty or can be obtained from a $132$-avoiding simsun permutation by shifting all the entries up by one and inserting 1 right before
the smallest entry. The term $zR(z)$ at the end comes from the fact that when $\tau$ is nonempty, it can be obtained from a $132$-avoiding simsun permutation by inserting a copy of the first entry at the beginning of the permutation and increasing all
the entries greater than or equal to it by one.

Solving equation~(\ref{eq:R}) we obtain the generating function for the secondary structure numbers.
\end{proof}

From the above proof one can derive a recursive bijection $\bij$ from $\RS_n(132)$ to the set $\D_n^1$ of Dyck paths of semilength $n$ with no $UUU$ and no $DDD$,
which has cardinality $S_n$ (see Subsection~\ref{sec:Dyck}). Let $\red$ denote the usual reduction, and let $\red_1(\sigma)$ denote the permutation
obtained by removing the smallest entry from $\sigma$ and reducing the rest. For $\pi\in\RS_n(132)$, define $\bij(\pi)$ as follows:
\bit\item if $\pi$ is empty, let $\bij(\pi)$ be the empty path;
\item if $\pi=\sigma n$, let $\bij(\pi)=UD\bij(\sigma)$;
\item if $\pi=n \tau_1\dots\tau_{n-1}$ with $n\ge2$, let $\bij(\pi)=UUDD\bij(\red(\tau_2\dots\tau_{n-1}))$;
\item if $\pi=\sigma n\tau$ with $\sigma$ and $\tau$ nonempty, let $\bij(\pi)=UUD\bij(\red_1(\sigma))UDD\bij(\red(\tau_2\tau_3\dots))$.
\eit

As an example of the decomposition in the second proof above,
consider $\pi=75682341\in\RS_8(132)$. Then $\sigma=756$,
from where deleting the smallest entry and reducing we obtain $21\in\RS(132)$, and $\tau=2341$, from
where deleting the first entry and reducing we obtain $231\in\RS(132)$. From $21$ and $231$ we can reconstruct $\pi$ as described in the proof.
Applying $\bij$ to this example we get
$$\bij(75682341)=UUD\bij(21)UDD\bij(231)=UUDUUDD\,UDDUUD\,UDD,$$
which is drawn in Figure~\ref{fig:bij132}.

\begin{figure}[hbt]
\centering
\epsfig{file=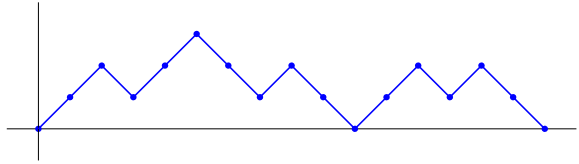,width=3in}
\caption{\label{fig:bij132} The Dyck path $\bij(75682341)$, which has no $UUU$ and no $DDD$.}
\end{figure}

It follows easily by induction that
the number of descents of $\pi$ equals the number of occurrences of $UU$ in $\bij(\pi)$. Indeed, in the
last two bullets of the above recursive definition of $\bij$, a new descent is added to the permutation and a new
$UU$ is added to the path.

\begin{proof}[Third proof (bijection)]
Here we give a nonrecursive bijection between $\RS_n(132)$ and Dyck paths of semilength $n+2$ with no $UDU$ and no $DUD$,
which are also counted by $S_n$ (see Subsection~\ref{sec:Dyck}).
We claim that when $\krar_{132}$, defined in Section~\ref{sec:krar}, is restricted to simsun permutations, it induces a bijection between $\RS_n(132)$ and the set $\D'_n$ of
Dyck paths of semilength $n$ with the property that all the peaks, with the possible exception of the first and the last one, are of the form $UUDD$.
In other words, $\D'_n$ consists of paths having no ascents of length 1 and no descents of length 1 between consecutive valleys.
Our claim follows easily from Lemma~\ref{lem:132lrm} below: (a) is equivalent to $\krar_{132}(\pi)$ having no descents of length 1 between consecutive valleys,
and (b) is equivalent to $\krar_{132}(\pi)$ having no ascent of length 1 between consecutive valleys.

The set $\D'_n$ is in bijection with the set $\D^2_{n+2}$ of Dyck paths of semilength $n+2$ having all peaks of the form $UUDD$, or equivalently,
having no $UDU$ and no $DUD$. The bijection is very simple: given a path in $\D'_n$, add a $U$ to the first and last ascents, and a $D$ to the first and last descents
to obtain a path in $\D^2_{n+2}$ (if the path has only one peak, then add two $U$ steps to its only ascent and two $D$ steps to its only descent). See Figure~\ref{fig:bij132nonr} for an example.
\end{proof}

\begin{figure}[hbt]
\centering
\epsfig{file=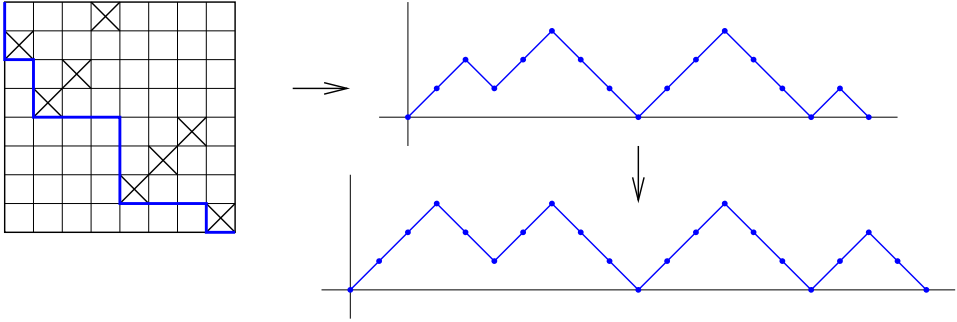,width=5in}
\caption{\label{fig:bij132nonr} The Dyck path with no $UDU$ and no $DUD$ corresponding to $75682341$.}
\end{figure}

\begin{lemma}\label{lem:132lrm}
Let $\pi\in\S_n(132)$, and let $l_1>l_2>\dots>l_r$ be its left-to-right minima. Then $\pi$ is simsun if and only if
\ben\renewcommand{\labelenumi}{(\alph{enumi})}
\item $\pi$ has no double descent, and
\item $l_{j-1} - l_{j} \ge2$ for all $2\le j\le r-1$.
\een
\end{lemma}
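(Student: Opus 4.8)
The plan is to couple the familiar structural description of $132$-avoiding permutations by their left-to-right minima with the reformulation of the simsun condition recalled just before~(\ref{eq:inclRS}): $\pi$ fails to be simsun exactly when there is a triple $a<b<c$ with $\pi_a>\pi_b>\pi_c$ and $\pi_i>\pi_a$ for every $i$ strictly between $a$ and $c$. First I would record the shape of $\pi\in\S_n(132)$. Writing $p_1<\dots<p_r$ for the positions of the left-to-right minima, so that $\pi_{p_s}=l_s$, a $132$ pattern formed with the running minimum shows that the entries strictly between two consecutive minima increase; hence every descent of $\pi$ sits immediately before some $l_s$ with $s\ge2$, and the block $B_s$ of entries strictly between $l_s$ and $l_{s+1}$ consists of values exceeding $l_s$. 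I would also note the running-minimum fact that every entry to the left of $p_s$ is at least $l_{s-1}$, so no value in the open interval $(l_s,l_{s-1})$ can occur before position $p_s$.

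For the forward implication, condition (a) is immediate, since a double descent is itself a forbidden triple (take the three consecutive entries). For (b) I would argue contrapositively: if $l_{j-1}=l_j+1$ for some $2\le j\le r-1$, then $(p_{j-1},p_j,p_{j+1})$ is a forbidden triple. Indeed $l_{j-1}>l_j>l_{j+1}$; the entries between $p_{j-1}$ and $p_j$ lie in $B_{j-1}$ and exceed $l_{j-1}$; and the entries between $p_j$ and $p_{j+1}$ lie in $B_j$, exceed $l_j$, and differ from the already-placed value $l_{j-1}=l_j+1$, hence are at least $l_j+2>l_{j-1}$.

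For the converse, assume (a) and (b) and suppose a forbidden triple $a<b<c$ exists. A pair of $132$-avoidance arguments shows that $\pi_b$ and $\pi_c$ are consecutive left-to-right minima: any entry left of $b$ that is smaller than $\pi_b$ must occur before $a$ (those strictly between $a$ and $b$ are too large) and then forms a $132$ with $\pi_a,\pi_b$, so $\pi_b$ is a minimum; the same reasoning, together with the fact that the entries between $b$ and $c$ all exceed $\pi_a>\pi_c$, makes $\pi_c$ the next minimum. Thus $\pi_b=l_j$, $\pi_c=l_{j+1}$ with $2\le j\le r-1$. The crux is then the claim, using \emph{both} hypotheses, that the value $l_j+1$ lies in the block $B_j$; granting it, every entry of $B_j$ exceeds $\pi_a>l_j$, so $l_j+1>\pi_a>l_j$, which is impossible for integers.

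Proving this claim is the step I expect to be the main obstacle. By (b) the value $l_j+1$ is not a left-to-right minimum, and by the running-minimum fact it does not occur before $p_j$, so it occurs after $p_j$. If it occurred after $p_{j+1}$, then, since (a) forces $B_j\neq\emptyset$ and that value lies elsewhere, $B_j$ would contain some $w\ge l_j+2$, and $l_j,w,l_j+1$ (read at $p_j$, the position of $w$, and the position of $l_j+1$) would be a $132$ pattern, a contradiction; hence $l_j+1$ lands strictly between $p_j$ and $p_{j+1}$, i.e.\ in $B_j$. The delicate point is precisely this interplay: neither hypothesis alone suffices, as the $132$-avoiding permutation $5\,2\,1\,3\,4$ has all relevant value-gaps at least $2$ (so (b) holds) yet an empty block (so (a) fails) and is not simsun, which is why locating $l_j+1$ requires (a) and (b) simultaneously.
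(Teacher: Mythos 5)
Your proof is correct and follows essentially the same route as the paper's: both directions rest on the forbidden-triple reformulation of the simsun condition, on recognizing $\pi_b$ as a left-to-right minimum, and on tracking where the value $\pi_b+1$ can sit, using (b) and $132$-avoidance to rule out positions. The only divergence is the ending of the converse: the paper pushes $\pi_b+1$ to the right of position $c$ and then derives a contradiction from $\pi_{b+1}$ (a double descent or a $132$), whereas you force $\pi_b+1$ into the block strictly between $b$ and $c$ and conclude with the integrality contradiction $l_j<\pi_a<l_j+1$; both endings are valid.
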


\begin{proof}
Assume that $\pi$ is simsun. Then, it clearly has no double descent. Suppose now that (b) does not hold, so there is some $2\le j\le r-1$ such that
$l_{j-1}=l_{j}+1$. Then we claim that $l_{j-1}l_j l_{j+1}$ is a double descent in the permutation $\pi$ restricted to $\{1,2,\dots,l_{j-1}\}$. Indeed, since $l_j$ and $l_{j+1}$
are consecutive left-to-right minima, all the entries to the left of $l_{j+1}$ are at least as large as $l_j$, so with the exception of $l_{j-1}$ and $l_j$ they are all
greater than $l_{j-1}$.

Assume now that $\pi$ satisfies (a) and (b), and suppose it is not simsun. Then there exist indices $a<b<c$ such that $\pi_a>\pi_b>\pi_c$ and $\pi_i>\pi_a$
for all $a<i<b$ and all $b<i<c$. Additionally, $\pi_i>\pi_b$ for all $i<a$, otherwise $\pi_i\pi_a\pi_b$ would be an occurrence of $132$. Thus, $\pi_b$ is a
left-to-right minimum, say $\pi_b=l_j$, where $2\le j\le r-1$. If $\pi_b+1$ was to the left of $\pi_b$ (in the one-line notation of $\pi$), then it would also be
a left-to-right minimum, so we would have $l_{j-1}-l_j=(\pi_b+1)-\pi_b=1$, contradicting (b). Hence $\pi_b+1$ is to the right of $\pi_b$, and in fact to the right
of $\pi_c$ because we have seen that $\pi_i>\pi_a$ for all $b<i<c$.
Now we look at $\pi_{b+1}$. If $\pi_{b+1}<\pi_b$, then $\pi_{b-1}\pi_b\pi_{b+1}$ would be a double descent in $\pi$, contradicting (a). But if $\pi_{b+1}>\pi_b$,
then $\pi_b\pi_{b+1}\pi_b+1$ is an occurrence of $132$ in $\pi$, which is a contradiction as well.
\end{proof}

\subsection{Restricted Dyck paths}\label{sec:Dyck}

We have defined $\D^1_n$ (resp. $\D^2_n$) to be the set of Dyck paths of semilength $n$ with no $UUU$ and no $DDD$ (resp. having all peaks of the form $UUDD$). For $n\ge2$,
the restriction for a path to be in $\D_n^2$ is equivalent to the path having no $UDU$ and no $DUD$.
 The fact that $|\D^1_n|=|\D^2_{n+2}|=S_n$ is mentioned in~\cite{Don}.
Here we provide easy proofs using generating functions.

Every path $P\in\D^1_n$ with $n\ge1$ can be written as either $P=UDQ$, $P=UUDDQ$, or $P=UUDQUDDQ'$, where $Q$ and $Q'$ are arbitrary Dyck paths with no $UUU$ and no $DDD$.
It follows that the generating function $F(z)=\sum_{n\ge0}|\D^1_n|z^n$ satisfies $$F(z)=1+zF(z)+z^2F(z)+z^3F(z)^2,$$ from where
$$F(z)=\frac{1-z-z^2-\sqrt{1-2z-z^2-2z^3+z^4}}{2z^3},$$
which is the generating function for the sequence $S_n$ (see Table~\ref{tab:seq}).

Similarly, every path $P\in\D^2_n$ with $n\ge1$ can be written as either $P=UQDQ'$ or $P=UUDDQ'$, where  $Q,Q'$
are Dyck paths having all peaks of the form $UUDD$ and, moreover, $Q$ is nonempty. It follows that the generating function $G(z)=\sum_{n\ge0}|\D^2_n|z^n$ satisfies $$G(z)=1+z(G(z)-1+z)G(z),$$ from where
$$G(z)=\frac{1+z-z^2-\sqrt{1-2z-z^2-2z^3+z^4}}{2z}=1+z^2F(z).$$

\section{Avoiding $213$}

Simsun permutations avoiding $213$ are counted by the Motzkin numbers. We give three proofs.

\begin{prop}\label{prop:213}
$$|\RS_n(213)|=M_n.$$
\end{prop}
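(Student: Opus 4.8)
The plan is to prove $|\RS_n(213)|=M_n$ by finding a decomposition of $213$-avoiding simsun permutations that produces the Motzkin generating function equation $M(z)=1+zM(z)+z^2M(z)^2$. Following the template of the second proof for the $132$ case, I would first understand the structure imposed by avoiding $213$ together with the simsun condition. For $132$-avoiders the natural decomposition was around the maximal entry $n$ as $\pi=\sigma n\tau$ with $\sigma\gg\tau$; for $213$-avoiders the analogous canonical decomposition is around the position of the minimal entry $1$. Recall that a $213$-avoiding permutation can be written as $\pi=\sigma 1\tau$ where every entry of $\sigma$ is smaller than every entry of $\tau$ (since any entry before $1$ that exceeds an entry after $1$ would, together with $1$, create a $213$ pattern), with $\red(\sigma)$ and $\red(\tau)$ both being $213$-avoiding.

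The main work is to analyze how the simsun condition interacts with this decomposition and to determine precisely which configurations of $\sigma$ and $\tau$ yield a valid $213$-avoiding simsun permutation. First I would treat the cases where $\sigma$ is empty (so $\pi=1\tau$) and where $\tau$ is empty (so $\pi=\sigma 1$) separately, as these should give the linear terms. Then, for the case where both $\sigma$ and $\tau$ are nonempty, I expect the simsun restriction to force a rigid local structure near the entry $1$ — analogous to how, in the $132$ proof, the simsun condition forced the two smallest entries of $\sigma$ to be adjacent and forced $\tau_2=\tau_1+1$. I anticipate that avoiding the double descent $\wh{32}1$ around the position of $1$ will constrain the entry immediately preceding $1$ and the entry immediately following it, pinning down one degree of freedom on each side, so that the remaining free data reduces to two independent $213$-avoiding simsun permutations obtained by deletion and reduction.

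The hard part will be correctly identifying these local constraints and, crucially, verifying the converse: that the stated conditions on $\sigma$ and $\tau$ are not only necessary but sufficient to guarantee both the simsun property and $213$-avoidance of the reconstructed $\pi$. I would need to check that no double descent is created at the seams where the pieces are glued (in the restriction to each initial segment $\{1,\dots,k\}$), and that reintroducing the deleted entries cannot create a new $213$ occurrence spanning $\sigma$, the entry $1$, and $\tau$. Once the bookkeeping is done, I would translate the decomposition into a functional equation for $R(z)=\sum_{n\ge0}|\RS_n(213)|z^n$. The target is to obtain exactly $R(z)=1+zR(z)+z^2R(z)^2$, whose solution is the Motzkin generating function from Table~\ref{tab:seq}; matching the combinatorial cases to the three terms $1$, $zR(z)$, and $z^2R(z)^2$ will confirm the structural analysis is correct.

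As an alternative or confirmatory route, and in keeping with the paper's stated emphasis on lattice-path bijections, I would keep in mind a direct bijection to $\M_n$. Since $213$-avoiders map bijectively to Dyck paths under $\krar_{213}$, one expects the simsun condition to single out a path class of cardinality $M_n$; establishing a length-preserving bijection from $\RS_n(213)$ to Motzkin paths (or to Dyck paths in a Motzkin-counted subfamily) would give a third proof paralleling the $132$ treatment, and would serve as an independent check on the generating-function derivation.
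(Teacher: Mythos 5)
There is a genuine gap, and also a sign error at the very first step. For the decomposition $\pi=\sigma 1\tau$ of a $213$-avoider around the entry $1$, the value relation goes the other way: if some $a\in\sigma$ were \emph{smaller} than some $c\in\tau$, then $a,1,c$ would be an occurrence of $213$; so in fact every entry of $\sigma$ must be \emph{larger} than every entry of $\tau$ (your parenthetical justification describes an occurrence of $312$, not $213$). This is fixable, but the corrected picture undermines the heart of your plan. Since $1$ is smaller than everything, no double descent in any restriction of $\pi$ can end immediately after the entry $1$, so the simsun condition imposes \emph{no} extra constraint on $\tau$ beyond $\red(\tau)\in\RS(213)$; and on the $\sigma$ side the constraint is not local at all: for every $j\ge 2$, the restriction of $\sigma$ to its $j$ smallest entries must not end with a descent (else that descent followed by $1$ is a double descent in the restriction of $\pi$ to the appropriate $\{1,\dots,k\}$). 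This is a global condition defining a proper subclass of $\RS(213)$ -- its counting sequence begins $1,1,1,2,4,\dots$, i.e.\ $M_{n-1}$ -- so the decomposition yields $R(z)=1+zA^*(z)R(z)$ with $A^*$ an \emph{a priori} unknown series, not the hoped-for $R=1+zR+z^2R^2$. The anticipated ``one pinned degree of freedom on each side,'' modeled on the $132$ case, simply does not occur here (that rigidity in the $132$ case came from $132$-avoidance collapsing the global no-trailing-descent condition to a statement about the two smallest entries of $\sigma$; $213$-avoidance does no such collapsing). To complete your route you would still have to prove $A^*(z)=1+zR(z)$, e.g.\ by a bijection from this subclass onto $\RS_{n-1}(213)$ (deleting the last entry appears to work, but needs an inverse), and that is essentially the whole difficulty, which the proposal does not address.

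For comparison, the paper avoids the decomposition route entirely for $213$ (it uses one only for $231$, where splitting at $n$ is clean): it gives a generating-tree proof with succession rule $(k)\longrightarrow(1)(2)\dots(k-1)(k+1)$, a bijection through the sequence family $\A_n$, and a bijection sending $\pi$ to $\krar_{213}(\pi)$, characterizing the image as Dyck paths with no $DUD$ except possibly at the last peak and then invoking Callan's bijection to Motzkin paths. Your closing remark about a $\krar_{213}$-based bijection points in the right direction, but without identifying the image class and the map to $\M_n$ it is an intention rather than a proof.
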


\begin{proof}[First proof (generating tree)]
Let $\pi\in\RS_n(213)$, and let $j$ be such that $\pi_j=n$. Since $\pi$ is $213$-avoiding, it has no descents to the left of $\pi_j$. If $j=n$, then $\pi=12\dots n$; otherwise $\pi_1<\pi_2<\dots<\pi_{j-1}<n>\pi_{j+1}$.
Consider the subtree $\T_{213}$ of $\T$ consisting of $213$-avoiding simsun permutations.
In each of the above two cases, the fertility positions of $\pi$ where the insertion
of $n+1$ produces a permutation in $\RS_{n+1}(231)$ are the ones marked by the upper dots: $\udt1\udt2\udt3\udt\dots\udt n\udt$ ($n+1$ fertility positions), $\udt\pi_1\udt\pi_2\udt\dots\udt\pi_{j-2}\udt\pi_{j-1}n\udt\pi_{j+1}\dots\pi_n$
($j$ fertility positions). It follows that a permutation $\pi\in\RS_n(213)$ with $k$ fertility positions generates $k$ permutations with $1,2,\dots,k-1,k+1$ fertility positions, respectively.

If we label each $\pi\in\RS(213)$ with its number $k$ of fertility positions, we obtain the succession rule
\beq\label{eq:suc213}(k)\longrightarrow(1)(2)\dots(k-1)(k+1)\eeq
for the labels of $\T_{213}$, with the root having label $(1)$, corresponding to the empty permutation. Denoting by $B(u,z)$ the generating function where the coefficient of $u^kz^n$ is the number of
nodes at level $n$ with label $(k)$, the above succession rule translates into the functional equation
$$B(u,z)=u+z\left(\frac{B(u,z)-uB(1,z)}{u-1}+uB(u,z)\right).$$
Collecting the terms with $B(u,z)$ and canceling the kernel, we obtain
$$B(1,z)=\frac{1-z-\sqrt{1-2z-3z^2}}{2z^2},$$
which is the generating function for the Motzkin numbers.

Alternatively, to the succession rule~(\ref{eq:suc213}) there corresponds the production matrix~\cite{DFR1}
$$\left(
  \begin{array}{ccccccc}
    0 & 1 & 0 & 0 & 0 & 0 & \dots \\
    1 & 0 & 1 & 0 & 0 & 0 & \dots \\
    1 & 1 & 0 & 1 & 0 & 0 & \dots \\
    1 & 1 & 1 & 0 & 1 & 0 & \dots \\
    1 & 1 & 1 & 1 & 0 & 1 & \dots \\
    \vdots & \vdots & \vdots & \vdots & \vdots & \vdots & \ddots \\
  \end{array}
\right).$$
Then, taking $b=0$, $c=1$, and $r=1$ in Corollary 3.1 of~\cite{DFR1}, we obtain that the induced generating function $f=f(z)$ satisfies $f = 1 + zf +z^2f^2$. One can also use Propositions 3.2 and 3.3 of~\cite{DFR2}.
\end{proof}

\begin{proof}[Second proof (bijection)]
We describe a bijection from Motzkin paths to $213$-avoiding simsun permutations.
It is in fact a composition of two bijections, one from $\M_n$ to $\A_n$ and one from $\A_n$ to $\RS_n(213)$,
where
\beq\label{eq:An}\A_n=\{(a_1,a_2,\dots,a_n):a_1=1 \mbox{ and for all }i\ge1,\, a_{i+1}=a_i+1 \mbox{ or } 0\le a_{i+1}< a_i\}.\eeq
The bijection between $\M_n$ and $\A_n$ is presented in Section~\ref{sec:Motzkin}.

We now describe the bijection from $\A_n$ to $\RS_n(213)$. Given $(a_1,a_2,\dots,a_n)\in\A_n$,
start from the empty permutation and, for $i$ from $1$ to $n$, insert $i$ so that it leaves $a_i$ of the previously inserted entries
to its left, unless $a_i$ is larger than the number $i-1$ of entries so far, in which case we insert $i$ in the rightmost position.
We claim that the resulting permutation is in $\RS_n(213)$, and that the map is a bijection.

Indeed, this map amounts to interpreting the sequence $(a_1+1,a_2+1,\dots,a_n+1)$ as the labels of a path starting at the root of $\T_{213}$.
The succession rule~(\ref{eq:suc213}) guarantees that the sequence describes a path, and that every path of length $n$ determines a sequence in $\A_n$.
The map associates each sequence to the permutation at the end of the path.

For example, the permutation corresponding to the sequence $$(1,2,0,1,2,3,4,5,4,2,3,4,3,4,1)\in\A_{15},$$ which in turn comes from the path in~Figure~\ref{fig:motzkin213}, is
$$3\, 15\, 4\, 10\, 13\, 14\, 11\, 12\, 5\, 6\, 9\, 7\, 8\, 1\, 2.$$
\end{proof}

\begin{proof}[Third proof (bijection)]
The bijection $\krar_{213}$ shown in Figure~\ref{fig:krar} can be accurately described as follows.
Given the array of a permutation $\pi\in\S_n(213)$, consider the path from the upper-left corner to the lower-right corner
with steps east and south that leaves all the crosses to its left and stays always as close to the diagonal connecting these two corners as possible.
Now replace each east step with a $U$ and each south step with a $D$ to obtain a Dyck path $\krar_{213}(\pi)\in\D_n$.

We claim that $\pi$ is simsun if and only if the path $\krar_{213}(\pi)$ does not contain a $DUD$ except possibly at the last peak.
To see this, let $\pi\in\S_n(213)$. Assume first that is not simsun, so there exist indices $a<b<c$ with $\pi_a>\pi_b>\pi_c$ and so that
$\pi_i>\pi_a$ for all $a<i<b$ and all $b<i<c$. Since $\pi$ is $213$-avoiding, $\pi_b$ must be a right-to-left maximum, and $\pi_{b-1}\ge\pi_a>\pi_b$.
It follows that the step in $\krar_{213}(\pi)$ preceding the peak caused by the cross $(b,\pi_b)$ is a $D$, so the path contains a $DUD$, and this is not the last
peak of the path because $b<n$.

Conversely, if $\krar_{213}(\pi)$ contains a $DUD$ before the last peak, let $(b,\pi_b)$ be the cross creating the peak inside this $DUD$. Then $\pi_b$ is a right-to-left maximum, so $\pi_{b+1}<\pi_b$,
and also $\pi_{b-1}>\pi_b$, otherwise the peak would not be preceded by a $D$. But this means that $\pi_{b-1}\pi_b\pi_{b+1}$ is a double descent, so $\pi$ is not simsun.

From a Dyck path of semilength $n$ with no $DUD$ except possibly at the last peak we can obtain a Dyck path of semilength $n+1$ with no $DUD$ by adding a $U$ to the last ascent and a $D$ to the last descent of the path.
This process is clearly reversible. Finally, we apply a bijection due to Callan~\cite{Cal} between Dyck paths of semilength $n+1$ with no $DUD$ and Motzkin paths of length $n$:
mark each $D$ that is preceded by a $U$ and each $U$ that is not preceded by a $D$, change each unmarked $D$ whose matching $U$ is marked to an $H$, and delete all marked steps.

Figure~\ref{fig:motzkin213cal} shows an example of the stages of this bijection applied to $$\pi=1\,13\,2\,8\,11\,12\,9\,10\,3\,4\,5\,7\,6\,\in\RS_{13}(213).$$

\begin{figure}[hbt]
\centering
\epsfig{file=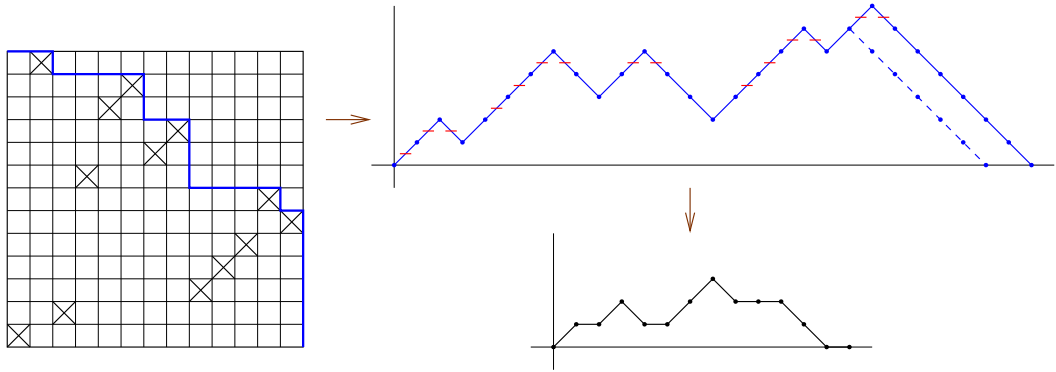,width=5.5in}
\caption{\label{fig:motzkin213cal} Starting from $\pi=1\,13\, 2\, 8\, 11\, 12\, 9\, 10\, 3\, 4\, 5\, 7\, 6\in\RS_{13}(213)$, we first get a Dyck path with no $DUD$ and then a Motzkin path.}
\end{figure}
\end{proof}

Simsun permutations avoiding $213$ can be characterized in terms of generalized pattern avoidance.

\begin{prop}
$$\RS_n(213)=\S_n(213,\wh{321}).$$
\end{prop}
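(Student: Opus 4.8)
The plan is to observe that both sides are subsets of $\S_n(213)$, so the statement reduces to the claim that, \emph{among $213$-avoiding permutations}, being simsun is equivalent to avoiding the generalized pattern $\wh{321}$ (equivalently, having no double descent). One of the two inclusions is free: the right-hand inclusion of~(\ref{eq:inclRS}) gives $\RS_n\subseteq\S_n(\wh{321})$, and intersecting with $\S_n(213)$ yields $\RS_n(213)\subseteq\S_n(213,\wh{321})$ at once.

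For the reverse inclusion $\S_n(213,\wh{321})\subseteq\RS_n(213)$, I would argue by contradiction. Suppose $\pi\in\S_n(213)$ has no double descent but fails to be simsun. By the reformulation of the simsun condition given just before~(\ref{eq:inclRS}), there are indices $a<b<c$ with $\pi_a>\pi_b>\pi_c$ and $\pi_i>\pi_a$ for all $a<i<b$ and all $b<i<c$. The first step is to use $213$-avoidance to force $c=b+1$: if there were any index $i$ with $b<i<c$, then $\pi_i>\pi_a$, so the entries at positions $a<b<i$ would satisfy $\pi_b<\pi_a<\pi_i$, i.e. $\pi_a\pi_b\pi_i$ reduces to $213$, a contradiction. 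Hence no entries lie strictly between positions $b$ and $c$.

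With $c=b+1$ in hand, the second step is to locate a double descent. Since $\pi_i>\pi_a$ for all $a<i<b$, we have $\pi_{b-1}\ge\pi_a$ (with equality exactly when $a=b-1$), so $\pi_{b-1}\ge\pi_a>\pi_b>\pi_{b+1}$. Thus $\pi_{b-1}\pi_b\pi_{b+1}$ is a double descent, contradicting the assumption that $\pi$ avoids $\wh{321}$. This contradiction shows $\pi$ must be simsun, completing the reverse inclusion and hence the equality.

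The only delicate point—the ``hard part''—is the first step, where a global simsun-violation (whose definition permits the offending triple to be spread out, with large entries filling the gaps between $a$, $b$, $c$) is shown to collapse to a \emph{consecutive} double descent under $213$-avoidance. Everything hinges on the fact that an entry larger than $\pi_a$ sitting between positions $b$ and $c$ would, together with $\pi_a$ and $\pi_b$, form a $213$ pattern; once the gap between $b$ and $c$ is eliminated, the gap between $a$ and $b$ is harmless because the intervening entries are all large. I expect no further obstacles, and in particular neither generating functions nor bijections are needed: this is a purely combinatorial pattern-containment argument. (Alternatively, the same conclusion could be read off the path characterization in the third proof of Proposition~\ref{prop:213}, but the direct argument above is shorter and self-contained.)
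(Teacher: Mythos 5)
Your proof is correct and follows essentially the same route as the paper's: the forward inclusion from~(\ref{eq:inclRS}), then using $213$-avoidance to force $c=b+1$ in a simsun violation (via the $213$ pattern $\pi_a\pi_b\pi_i$) and concluding that $\pi_{b-1}\pi_b\pi_{b+1}$ is a double descent. Your added remark that $\pi_{b-1}\ge\pi_a$ with equality iff $a=b-1$ just makes explicit a step the paper leaves implicit.
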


\begin{proof}
The inclusion $\RS_n(213)\subseteq\S_n(213,\wh{321})$ follows from~(\ref{eq:inclRS}).
Now assume that $\pi\in\S(213)$ is not simsun, and let $a<b<c$ be such that $\pi_a>\pi_b>\pi_c$ and
$\pi_i>\pi_a$ for all $a<i<b$ and all $b<i<c$.
If there was an entry $\pi_i$ with $b<i<c$, then $\pi_a\pi_b\pi_i$ would be an occurrence of $213$. It follows that $c=b+1$. Now, since $\pi_{b-1}>\pi_b$, we have that $\pi_{b-1}\pi_b\pi_{b+1}$ is an occurrence of $\wh{321}$.
\end{proof}

\subsection{A bijection for Motzkin paths}\label{sec:Motzkin}

Here we describe a bijection between $\M_n$ and the set $\A_n$ defined in equation~(\ref{eq:An}).

Given $M\in\M_n$, we first label the $n+1$ vertices of the path, that is, the lattice points between consecutive steps of the path, plus the initial and final vertices $(0,0)$ and $(n,0)$.
The label of each vertex $v$ is the number of steps that $M$ has in common with the path that
starts at $v$ and ends at $(n,0)$, has only $H$ and $D$ steps, never goes above $M$, and always stays as close to $M$ as possible.
Ignoring the rightmost label, which is always $0$, the remaining labels from right to left form a sequence $(a_1,a_2,\dots,a_n)\in\A_n$.

To see that this sequence is in $\A_n$, note that $a_{i+1}=a_i+1$ if the $(n-i)$-th step of $M$ is a $D$ or an $H$, and $0\le a_{i+1}<a_i$
if the step is a $U$. To recover $M$ from a given $(a_1,a_2,\dots,a_n)\in\A_n$, draw the steps from left to right as follows. For $j$ from $1$ to $n$, draw a $U$ if $a_{n-j+1}<a_{n-j}$. Otherwise, if $b$ is the $y$-coordinate
of the vertex we are at and $b>0$, let $a_k$ be the label of the last vertex with $y$-coordinate $b-1$; draw a $D$ if $a_{n-j}=a_k$, and an $H$ if $a_{n-j}>a_k$ or $b=0$.

For example, for the path $M=UHUDHUUDHDDHUHD\in\M_{15}$, the labels are shown in Figure~\ref{fig:motzkin213}.
\begin{figure}[hbt]
\centering
\epsfig{file=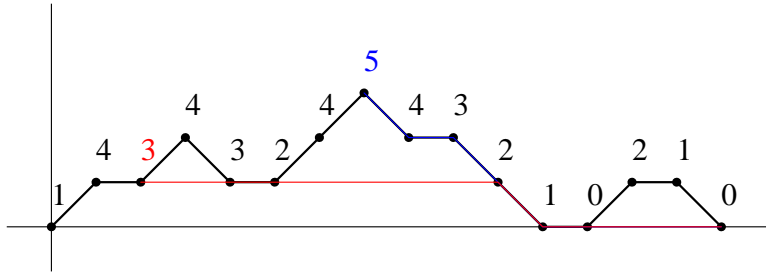,width=4in}
\caption{\label{fig:motzkin213} The bijection from Motkzin paths to sequences in $\A_n$.}
\end{figure}

To the best of our knowledge, this construction gives a new manifestation of the Motzkin numbers as counting sequences in $\A_n$.

Suppose now that the path $M\in\M_n$ is mapped by this bijection to the sequence $(a_1,\dots,a_n)\in\A_n$, and that $\pi\in\RS_n(213)$ is the corresponding permutation following the second proof of Proposition~\ref{prop:213}.
From the definition of $a_n$ it is clear that it equals the number of $H$ steps on the $x$-axis in $M$.
Additionally, the number of $U$ steps in $M$ equals the number of indices with $a_i>a_{i+1}$,
which in turn equals the number of descents of $\pi$ and the number of descents of $\pi^{-1}$. This follows from the fact that when inserting $n+1$ in $\pi\in\RS_n(213)$, the number of descents
of $\pi$ (resp. $\pi^{-1}$) increases by one unless $n+1$ is inserted in the rightmost fertility position, in which case it stays the same.

\section{Avoiding $231$}

Simsun permutations avoiding $231$ are also counted by the Motzkin numbers, although the proofs are different than in the previous section.

\begin{prop}
$$|\RS_n(231)|=M_n.$$
\end{prop}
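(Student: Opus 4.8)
The plan is to prove $|\RS_n(231)|=M_n$ by exhibiting a structural decomposition of $231$-avoiding simsun permutations that mirrors the decomposition used in the second proof for $132$, and then verifying that the resulting generating function satisfies the Motzkin equation $M(z)=1+zM(z)+z^2M(z)^2$. The natural starting point is to locate the position $j$ with $\pi_j=n$. For a $231$-avoiding permutation the entries to the right of $n$ must all be larger than the entries to the left of $n$, so we can write $\pi=\sigma\, n\,\tau$ with $\sigma\ll\tau$ (every entry of $\sigma$ smaller than every entry of $\tau$), where $\red(\sigma)$ and $\red(\tau)$ are themselves $231$-avoiding. I would first pin down exactly which pairs $(\sigma,\tau)$ yield a simsun permutation.

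First I would treat the boundary cases: if $\tau$ is empty then $\pi=\sigma n$ and $\sigma$ ranges over an arbitrary element of $\RS_{n-1}(231)$, contributing a factor $zM(z)$. If $\sigma$ is empty then $\pi=n\tau$, and here the simsun condition imposes a constraint analogous to the $132$ case: deleting $n$ leaves $\tau$, but the descent $n\tau_1$ together with the structure of $\tau$ must not create a forbidden double descent when entries are restricted. I expect the key local constraint to be that the junction around $n$ forces either $\tau$ to begin with an ascent, or $\tau_1$ to be the smallest available entry, so that no restriction of $\pi$ produces three consecutive decreasing entries straddling $n$. The heart of the argument is the interaction at the two seams $\sigma n$ and $n\tau$: I would show, using the equivalent simsun characterization from the excerpt (no indices $a<b<c$ with $\pi_a>\pi_b>\pi_c$ and all intermediate entries exceeding $\pi_a$), that the only extra requirement beyond $\red(\sigma),\red(\tau)\in\RS(231)$ is a bounded local condition at each seam, contributing a constant number of choices.

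Assembling these cases, I would derive a functional equation. The empty-$\tau$ case gives $zM(z)$; the case where both $\sigma$ and $\tau$ are nonempty should contribute a term of the form $z^2 M(z)^2$ after accounting for the forced local structure at the seams (one factor of $z$ for $n$ itself, and the seam constraints reducing what would be two free blocks to the Motzkin-type quadratic term rather than a Catalan-type one); and the remaining case contributes the constant term. The target is precisely $M(z)=1+zM(z)+z^2M(z)^2$, matching the generating function in Table~\ref{tab:seq}. An alternative I would keep in reserve is the generating-tree approach used for $213$: set up the subtree $\T_{231}$, compute fertility positions for $231$-avoiding simsun permutations, and hope to recover the succession rule $(k)\to(1)(2)\dots(k-1)(k+1)$ or a close variant, then invoke the kernel method exactly as in the first proof of Proposition~\ref{prop:213}.

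The main obstacle I anticipate is correctly identifying the seam constraints so that the quadratic term comes out as $z^2M(z)^2$ rather than $z^2$ times something larger or smaller. The $231$ pattern constrains the relative order of the two blocks in the opposite direction from $132$, so the decomposition $\sigma\ll\tau$ behaves differently, and I must be careful that the simsun condition restricted to initial segments $\{1,\dots,k\}$ is genuinely captured by a purely local condition at the two seams and does not secretly couple $\sigma$ and $\tau$ through a long-range decreasing pattern. Verifying that the forbidden configurations are exactly the local ones — and that the reduced blocks $\red(\sigma),\red(\tau)$ inherit the simsun property without additional global obstructions — is the delicate step, and it is where I would spend most of the care before reading the authors' proof.
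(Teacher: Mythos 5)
Your approach is essentially the paper's first proof (permutation decomposition), and your anticipated ``seam constraint'' is the right one: writing $\pi=\sigma n\tau$ with $\sigma\ll\tau$, the only condition beyond $\sigma\in\RS(231)$ and $\red(\tau)\in\RS(231)$ is that $\tau$ not start with a descent, since otherwise $n\tau_1\tau_2$ is a double descent in $\pi$ itself. The two conditions you offer as alternatives --- ``$\tau$ begins with an ascent'' and ``$\tau_1$ is the smallest entry of $\tau$'' --- are in fact equivalent for nonempty $231$-avoiding $\tau$ (if $\tau_1<\tau_2$ but some later entry were smaller than $\tau_1$, that entry would complete a $231$), and it is the forced minimal first entry of $\tau$ that supplies the second factor of $z$ in the quadratic term. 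Your worry about long-range coupling dissolves at once: because $\sigma\ll\tau$, any restriction of $\pi$ to $\{1,\dots,k\}$ that contains an entry of $\tau$ contains all of $\sigma$, and no decreasing pair straddles the seam, so the simsun condition for $\pi$ reduces to the simsun conditions on $\sigma$ and $\red(\tau)$ plus the single local condition at $n$; in particular there is no constraint at all at the seam $\sigma n$. The one genuine slip is in the bookkeeping: splitting into ``$\tau$ empty'', ``both blocks nonempty'', and ``the rest is the constant term'' leaves the case $\sigma$ empty, $\tau$ nonempty unaccounted for, and that case is certainly not a constant. The correct split is by whether $\tau$ is empty, with $\sigma$ arbitrary (possibly empty) in both branches: $\tau$ empty contributes $zM(z)$, $\tau$ nonempty contributes $z\cdot M(z)\cdot zM(z)=z^2M(z)^2$, and the empty permutation contributes $1$, giving $M=1+zM+z^2M^2$. (Equivalently, the paper sets $H^{\uparrow}(z)=1+zH(z)$ for the class of $231$-avoiding simsun permutations not starting with a descent and writes $H=1+zHH^{\uparrow}$.) Your fallback generating-tree route is the paper's second proof, and it does recover the succession rule $(k)\to(1)(2)\cdots(k-1)(k+1)$.
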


\begin{proof}[First proof (permutation decomposition)]
A permutation $\pi\in\RS_n(231)$ with $n\ge1$ can be decomposed as $\pi=\sigma n\tau$, where $\sigma\ll\tau$, $\sigma\in\RS(231)$, and $\red(\tau)\in\RS^\uparrow(231)$, which
is the set of permutations in $\RS(231)$ that do not start with a descent. Note that these conditions on $\sigma$ and $\tau$ are sufficient to guarantee that $\pi$ is simsun.
Similarly, any $\pi\in\RS^\uparrow_n(231)$ with $n\ge1$ is of the form $1\tau$, where $\rho(\tau)\in\RS_{n-1}(231)$.
Letting $H(z)=\sum_{n\ge0}|\RS_n(231)|z^n$ and $H^\uparrow(z)=\sum_{n\ge0}|\RS^\uparrow_n(231)|z^n$, the above decompositions translate into the equations
$$\left\{\begin{array}{l} H(z)=1+z H(z) H^\uparrow(z), \\ H^\uparrow(z)= 1 + zH(z). \end{array}\right.$$
Eliminating $H^\uparrow(z)$ we obtain $H(z)= 1 + zH(z) + z^2 H(z)^2$, and thus
$$H(z)=\frac{1-z-\sqrt{1-2z-3z^2}}{2z^2}.$$

\end{proof}

\begin{proof}[Second proof (generating tree)]
Let $\T_{231}$ be the subtree of $\T$ consisting of $231$-avoiding simsun permutations. The fertility positions of a permutation in this tree are
those spots that do not precede a descent and such that all the entries to their left are smaller than all the entries to their right. Note that this always includes the rightmost position of the permutation, which we call the
terminal fertility position. If $n+1$ is inserted in a non-terminal fertility position of $\pi\in\RS_n(231)$, then the fertility positions of the resulting permutation are the fertility positions of $\pi$ to
the left of where the insertion took place. If $n+1$ is inserted in the terminal position, all the fertility positions are preserved and a new one is created. It follows that a permutation with $k$ fertility
positions generates $k$ permutations with $1,2,\dots,k-1,k+1$ fertility positions. Now we continue as in the first proof of Proposition~\ref{prop:213}.
\end{proof}

\begin{proof}[Third proof (bijection)]
By Proposition~\ref{prop:231gen} below, $\RS_n(231)=\S_n(231,\wh{32}1)$. Permutations in this set are reversals of permutations in $\S_n(132,1\wh{23})$, that is,
$\pi_1\dots\pi_n\in\S_n(231,\wh{32}1)$ if and only if $\pi_n\dots\pi_1\in\S_n(132,1\wh{23})$. The latter set was studied in~\cite{EliMan}, where a
bijection between $\S_n(132,1\wh{23})$ and $\M_n$ is given. The resulting bijection $\bijr$ from $\RS_n(231)$ to $\M_n$ can be described as follows: apply $\krar_{231}$ to $\pi\in\RS_n(231)$ to obtain a UUU-free Dyck path, then
divide the Dyck path into pieces of the form $D$, $UD$ and $UUD$, and replace the pieces of each kind with $D$, $H$ and $U$, respectively (see Figure~\ref{fig:bijr}).

Alternatively, we can give the following recursive description of $\bijr$.
Let the image of the empty permutation be the empty path, and let $\bijr(1)=H$.
If $\pi\in\RS_n(231)$ is decomposable, that is, $\pi=\sigma\tau$ where $\sigma\ll\tau$ and $\sigma$ and $\tau$ are nonempty, let $\bijr(\pi)$ be the concatenation of the paths $\bijr(\sigma)$ and $\bijr(\red(\tau))$.
If $\pi=n1\sigma$, let $\bijr(\pi)=U\bijr(\red(\sigma))D$. We claim that this covers all the cases. Indeed, if $\pi$ is indecomposable then $\pi_1=n$, otherwise  avoidance of $231$ would force all the entries
to the left of $n$ to be smaller than all the entries to its right, contradicting indecomposability. Now $\pi_2=1$, otherwise $\pi_1\pi_2\pi_3$ would be a double descent (if $\pi_2>\pi_3$) or $\pi_2\pi_31$ would be an occurrence of $231$ (if $\pi_2<\pi_3$).
\end{proof}

\begin{figure}[hbt]
\centering
\epsfig{file=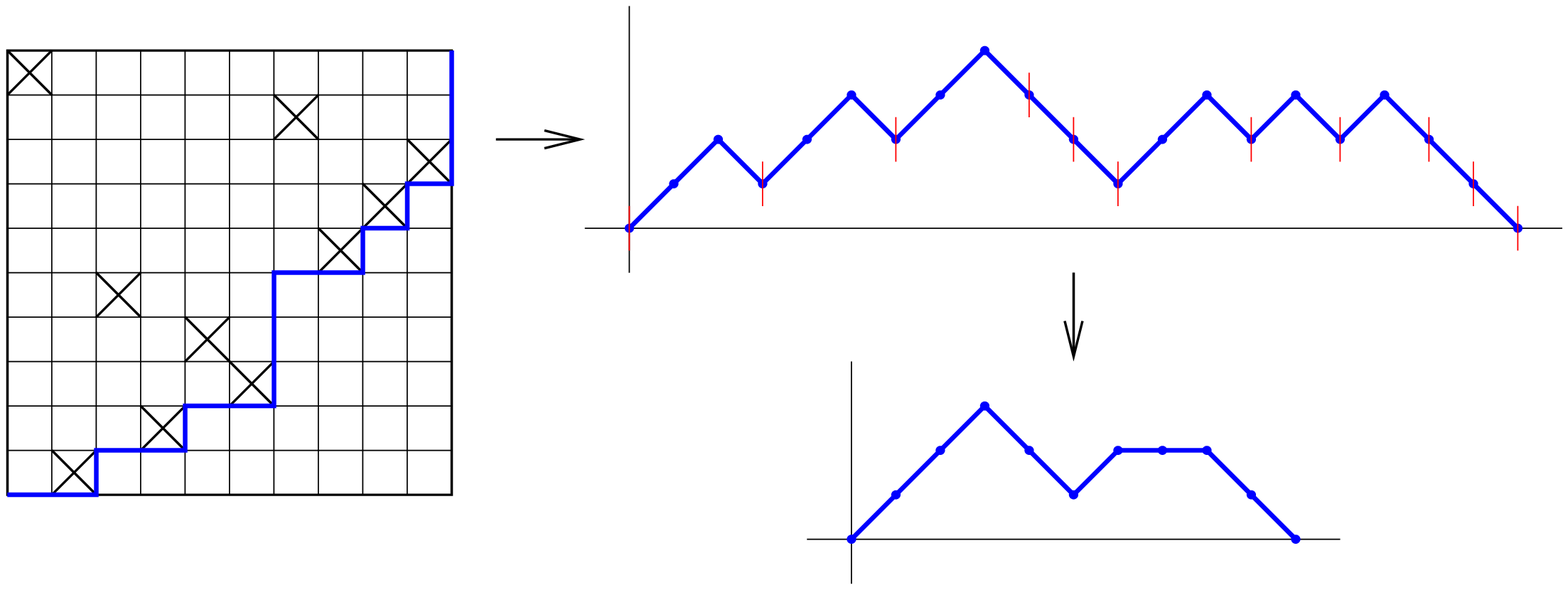,width=5in}
\caption{\label{fig:bijr} The bijection $\bijr$ applied to $10\, 1\, 5\, 2\, 4\, 3\, 9\, 6\, 7\, 8$.}
\end{figure}

The recursive description of $\bijr$ in the previous proof reveals the following statistic correspondences, which are easy to prove by induction:
\bit
\item the number of inversions of $\pi$ equals the area under $\bijr(\pi)$,
\item the number of descents of $\pi$ equals the number of $U$ steps of $\bijr(\pi)$,
\item the number of right-to-left maxima of $\pi$ equals one plus the length of the final descent of $\bijr(\pi)$.
\eit

Now we prove the characterization of simsun permutations avoiding $231$ in terms of generalized pattern avoidance.

\begin{prop}\label{prop:231gen}
$$\RS_n(231)=\S_n(231,\wh{321})=\S_n(231,\wh{32}1).$$
\end{prop}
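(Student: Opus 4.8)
The plan is to establish both equalities by sandwiching $\RS_n(231)$ between the two generalized-pattern classes and then collapsing the sandwich with a short combinatorial argument. The starting point is the chain of inclusions~(\ref{eq:inclRS}), namely $\S_n(\wh{32}1)\subseteq\RS_n\subseteq\S_n(\wh{321})$. Intersecting every term with $\S_n(231)$ gives at once
$$\S_n(231,\wh{32}1)\subseteq\RS_n(231)\subseteq\S_n(231,\wh{321}).$$
Moreover, any occurrence of $\wh{321}$ (a consecutive decreasing triple) is in particular an occurrence of $\wh{32}1$, so $\S_n(\wh{32}1)\subseteq\S_n(\wh{321})$ and hence $\S_n(231,\wh{32}1)\subseteq\S_n(231,\wh{321})$ as well. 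Thus all three sets will coincide as soon as I prove the single remaining inclusion $\S_n(231,\wh{321})\subseteq\S_n(231,\wh{32}1)$.

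First I would rephrase this remaining inclusion as its contrapositive inside the class $\S_n(231)$: if $\pi\in\S_n(231)$ contains $\wh{32}1$, then $\pi$ already contains $\wh{321}$. So fix an occurrence of $\wh{32}1$, given by adjacent positions $i,i+1$ with $\pi_i>\pi_{i+1}$ together with a later position $k>i+1$ satisfying $\pi_k<\pi_{i+1}$, and examine the entry $\pi_{i+2}$. If $\pi_{i+2}<\pi_{i+1}$, then $\pi_i>\pi_{i+1}>\pi_{i+2}$ is a consecutive decreasing triple, i.e. an occurrence of $\wh{321}$, and we are done.

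The remaining case $\pi_{i+2}>\pi_{i+1}$ is where the $231$-avoidance does the work, and I expect this to be the only real (though brief) obstacle. Here $\pi_{i+2}>\pi_{i+1}>\pi_k$, so $\pi_{i+2}\neq\pi_k$ and hence $k>i+2$; the three entries $\pi_{i+1},\pi_{i+2},\pi_k$ then sit at increasing positions $i+1<i+2<k$ with $\pi_{i+2}$ largest, $\pi_{i+1}$ in the middle, and $\pi_k$ smallest, so $\red(\pi_{i+1}\pi_{i+2}\pi_k)=231$. This contradicts $\pi\in\S_n(231)$, so this case cannot arise. Hence the first case always holds, producing the required double descent and closing the sandwich; all three displayed sets are therefore equal.
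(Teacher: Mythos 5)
Your proof is correct and follows essentially the same route as the paper: sandwich $\RS_n(231)$ between $\S_n(231,\wh{32}1)$ and $\S_n(231,\wh{321})$ using~(\ref{eq:inclRS}), then close the gap by showing that in a $231$-avoiding permutation an occurrence of $\wh{32}1$ forces $\pi_{i+2}<\pi_{i+1}$ (else $\pi_{i+1}\pi_{i+2}\pi_k$ is a $231$), yielding a double descent. The only difference is that you spell out the minor detail $k>i+2$, which the paper leaves implicit.
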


\begin{proof}
By the inclusions~(\ref{eq:inclRS}) restricted to $231$-avoiding permutations we have that $$\S_n(231,\wh{32}1)\subseteq\RS_n(231)\subseteq\S_n(231,\wh{321}),$$ so
it suffices to show that $\S_n(231,\wh{321})\subseteq\S_n(231,\wh{32}1)$.
Assume that $\pi\in\S_n(231)$ contains an occurrence $\pi_i\pi_{i+1}\pi_j$ of $\wh{32}1$. We must have $\pi_{i+2}<\pi_{i+1}$, otherwise $\pi_{i+1}\pi_{i+2}\pi_j$ would be an occurrence of $231$.
But then $\pi_i\pi_{i+1}\pi_{i+2}$ is an occurrence of $\wh{321}$.
\end{proof}

\section{Avoiding $312$}

The enumeration of simsun permutations avoiding $312$ is relatively straightforward.

\begin{prop}
$$|\RS_n(231)|=2^{n-1}.$$
\end{prop}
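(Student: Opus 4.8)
First, the displayed formula must read $|\RS_n(312)|=2^{n-1}$: the enclosing section treats $312$-avoidance, and $|\RS_n(231)|=M_n$ was established in the previous section (with $M_n\neq 2^{n-1}$ already at $n=4$), so the subscript $231$ as printed is a typographical slip for $312$. I prove the corrected statement; there is no count-preserving symmetry relating the classes $\RS_n(231)$ and $\RS_n(312)$, which is exactly why their enumerations differ.

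The plan is to set up a two-term recurrence through the generating tree $\T_{312}$ of $312$-avoiding simsun permutations, by pinning down the position of the largest entry. The key structural fact I would prove is that in every $\pi\in\RS_n(312)$ the value $n$ occupies either the last or the second-to-last position. Indeed, $312$-avoidance forces the entries to the right of $n$ to be decreasing: any ascent $\pi_j<\pi_k$ there would make $n\,\pi_j\,\pi_k$ an occurrence of $312$, since $n$ is the maximum. Moreover, if two or more entries followed $n$, then the first three entries $n\,\tau_1\,\tau_2$ would satisfy $n>\tau_1>\tau_2$, a decreasing triple, i.e.\ a double descent, which is forbidden by the simsun condition applied with $k=n$. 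Hence at most one entry lies to the right of $n$.

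Next I would run the deletion/insertion correspondence. Deleting $n$ from $\pi\in\RS_n(312)$ yields an element of $\RS_{n-1}(312)$: the simsun property is preserved by the general fact quoted after Theorem~\ref{th:rs}, and deleting an entry never creates a pattern occurrence, so $312$-avoidance is preserved as well. By the previous paragraph, $n$ sat in one of the two rightmost spots, so this deletion is the inverse of inserting $n$ into one of exactly those two spots of some $\sigma\in\RS_{n-1}(312)$. It remains to verify that both reinsertions are always admissible: placing $n$ at the very end, or immediately before the last entry, never creates a $312$ (at most one entry can follow $n$) and never creates a double descent (the new maximum is a local maximum in both spots, and the restrictions to the values $\{1,\dots,k\}$ with $k<n$ are unchanged). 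Therefore each $\sigma\in\RS_{n-1}(312)$ has exactly two children in $\T_{312}$, and every node at level $n$ arises in this way.

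Consequently $|\RS_n(312)|=2\,|\RS_{n-1}(312)|$ with $|\RS_1(312)|=1$, which gives $|\RS_n(312)|=2^{n-1}$; I would confirm the first levels $1,2,4,8$ as a sanity check. The main obstacle is the structural lemma of the second paragraph—correctly combining the $312$-condition (decreasing tail) with the simsun condition (no double descent, here applied to the full permutation with $k=n$) to force $n$ into the last two positions—together with the routine but genuinely necessary verification that both reinsertions preserve membership in $\RS(312)$.
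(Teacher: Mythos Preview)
Your diagnosis of the typo is correct, and your proof is correct and essentially the same as the paper's first proof: both are generating-tree arguments showing that every $\sigma\in\RS_{n-1}(312)$ has exactly two children in $\T_{312}$, namely those obtained by inserting $n$ in one of the two rightmost spots. The paper phrases this from the insertion side (a fertility position must not precede a descent and must have no ascent to its right, which forces it to be one of the last two spots), while you phrase it from the deletion side (the entries right of $n$ must be decreasing by $312$-avoidance, and then the simsun condition forbids more than one such entry); these are the same observation read in opposite directions.
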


\begin{proof}[First proof (generating tree)]
Consider the subtree of $\T$ consisting of $312$-avoiding simsun permutations. For $\pi\in\RS_n(312)$, the fertility positions where inserting $n+1$ produces a permutation in $\RS_{n+1}(312)$ are
those that do not precede a descent (because of the simsun condition) and do not have any ascent to their right (because of $312$ avoidance). The only two such positions are the last two, so the number of nodes
in the tree double at each level.
\end{proof}

\begin{proof}[Second proof (bijection)]
By Proposition~\ref{prop:312gen} below, $\RS_n(312)=\S_n(312,321)$. These permutations were enumerated in~\cite{SS}.
\end{proof}

\begin{prop}\label{prop:312gen}
$$\RS_n(312)=\S_n(312,321)=\S_n(312,\wh{32}1).$$
\end{prop}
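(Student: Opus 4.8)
The plan is to prove the chain of equalities by setting up a short cycle of set inclusions together with one equinumerosity argument, so that no delicate combinatorial lemma is needed. Write $A=\RS_n(312)$, $B=\S_n(312,321)$ and $C=\S_n(312,\wh{32}1)$. Three inclusions are essentially immediate. First, $C\subseteq A$ follows directly from~(\ref{eq:inclRS}) restricted to $312$-avoiding permutations, since $\S_n(\wh{32}1)\subseteq\RS_n$. Second, $B\subseteq A$: a permutation avoiding the classical pattern $321$ is automatically simsun, because the equivalent formulation of the non-simsun condition given just before~(\ref{eq:inclRS}) exhibits indices $a<b<c$ with $\pi_a>\pi_b>\pi_c$, that is, an honest occurrence of $321$; hence $\S_n(321)\subseteq\RS_n$, and intersecting with $\S_n(312)$ gives $B\subseteq A$. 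Third, $B\subseteq C$: every occurrence $\pi_i\pi_{i+1}\pi_j$ of $\wh{32}1$ is in particular an occurrence of the classical pattern $321$ (the indices $i<i+1<j$ carry strictly decreasing values), so avoiding $321$ forces avoiding $\wh{32}1$.

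To close the cycle I would use a counting argument. The generating-tree proof of the preceding proposition gives $|A|=|\RS_n(312)|=2^{n-1}$, independently of the present statement, and Simion and Schmidt~\cite{SS} show $|B|=|\S_n(312,321)|=2^{n-1}$. Since $B\subseteq A$ and both sets are finite of the same cardinality, we conclude $A=B$. Feeding this back into the inclusions $C\subseteq A=B\subseteq C$ yields $B=C$ as well, and therefore $A=B=C$, which is exactly the claimed identity.

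There is essentially no hard step along this route: the only points requiring thought are the observation that the non-simsun witness is a genuine $321$ (which makes $B\subseteq A$ immediate) and the reliance on the two independent enumerations of size $2^{n-1}$. If one preferred a proof that does not invoke the cardinalities, the single nontrivial ingredient would be the direct inclusion $A\subseteq B$, namely that a $312$-avoiding simsun permutation cannot contain a classical $321$. The main obstacle in that alternative route is the lemma that in a $312$-avoiding permutation every occurrence of $321$ forces a double descent $\wh{321}$: one fixes an occurrence $\pi_a>\pi_b>\pi_c$ of minimal span $c-a$ and uses $312$-avoidance (the entries to the right of $\pi_a$ that are smaller than $\pi_a$ must appear in decreasing order) to argue that the two smaller entries end up in adjacent positions, producing three consecutive decreasing entries and contradicting the simsun property. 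The counting route above sidesteps this case analysis entirely, which is consistent with the remark that this enumeration is relatively straightforward.
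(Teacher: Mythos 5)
Your proof is correct, and it is non-circular, but it closes the argument by a genuinely different route than the paper. Both arguments start from the same easy chain $\S_n(312,321)\subseteq\S_n(312,\wh{32}1)\subseteq\RS_n(312)$: an occurrence of $\wh{32}1$ is in particular an occurrence of $321$, and the second inclusion is~(\ref{eq:inclRS}). The paper then proves the remaining inclusion $\RS_n(312)\subseteq\S_n(312,321)$ directly: given an occurrence $\pi_a\pi_b\pi_c$ of $321$ in a $312$-avoider, it first shows $\pi_{b-1}>\pi_b$ (else $\pi_a\pi_{b-1}\pi_b$ is a $312$), then takes $d>b$ minimal with $\pi_d<\pi_b$ and checks that all entries between positions $b$ and $d$ exceed $\pi_{b-1}$, so $\pi_{b-1}\pi_b\pi_d$ witnesses the failure of the simsun condition. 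You instead close the cycle by cardinality, combining the generating-tree count $|\RS_n(312)|=2^{n-1}$ (which, as you note, does not depend on the present proposition) with $|\S_n(312,321)|=2^{n-1}$ from~\cite{SS}. This is valid, but it buys less: it imports an external enumeration, and it does not explain structurally why a $321$ inside a $312$-avoiding permutation forces a simsun violation --- which is exactly the content the paper's proof supplies and which its second (bijective) proof of the enumeration $|\RS_n(312)|=2^{n-1}$ is meant to rest on. One caution about your sketched ``alternative route'': an occurrence of $321$ in a $312$-avoiding permutation need \emph{not} produce three literally consecutive decreasing entries (the permutation $3241$ avoids $312$ and contains $321$ but has no $\wh{321}$); the correct conclusion, as in the paper, is a violation $\pi_{b-1}\pi_b\pi_d$ in which the intermediate entries are all larger than $\pi_{b-1}$, i.e., a double descent only after restricting to $\{1,\dots,\pi_{b-1}\}$. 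Since your main argument does not use that sketch, this does not affect its validity.
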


\begin{proof}
By the inclusions~(\ref{eq:inclRS}) and the fact that an occurrence of $\wh{32}1$ is also an occurrence of $321$, we have that $$\S_n(312,321)\subseteq\S_n(312,\wh{32}1)\subseteq\RS_n(312),$$ so
it suffices to show that $\RS_n(312)\subseteq\S_n(312,321)$. Assume that $\pi\in\S_n(312)$ contains an occurrence $\pi_a\pi_{b}\pi_c$ of $321$.
We show that $\pi$ is not simsun by producing an occurrence $\pi_{b-1} \pi_b \pi_d$ of $321$ such that all entries
between $\pi_b$ and $\pi_d$ are larger than $\pi_{b-1}$.
First we see that $\pi_{b-1}>\pi_b$, otherwise $a<b-1$ and $\pi_a\pi_{b-1}\pi_b$
would be an occurrence of $312$. Let $d$ be the smallest index such that $d>b$ and $\pi_d<\pi_b$. Clearly $d\le c$. Then $\pi_{b-1}>\pi_b>\pi_d$, and for all $b<i<d$, $\pi_i>\pi_{b-1}$ (otherwise $\pi_{b-1}\pi_b\pi_i$ would be
an occurrence of $312$), so $\pi$ is not simsun.
\end{proof}

\section{Double restrictions}\label{sec:double}

In this section we consider simsun permutations avoiding simultaneously two patterns of length~3.

\begin{lemma} For $n\ge4$,
$$|\RS_n(123,132)|=2.$$
\end{lemma}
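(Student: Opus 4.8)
The plan is to characterize the permutations in $\RS_n(123,132)$ explicitly and show there are exactly two of them for $n\ge4$. First I would combine the constraints: a permutation avoiding both $123$ and $132$ cannot have any entry $\pi_a$ followed (not necessarily immediately) by two larger entries in either relative order, so every entry has at most one entry to its right that exceeds it. In particular $\pi_1$ can have at most one larger entry after it, which forces $\pi_1\in\{n,n-1\}$. More generally, avoidance of both patterns with the common first symbol means that after any position, at most one larger value appears, which is a strong descent-like condition: such permutations are exactly those where $\pi$ reversed is layered, or more directly, permutations in $\S_n(123,132)$ are known from Simion--Schmidt to be very restricted (there are $2^{n-1}$ of them, each determined by choosing, at each step, whether the maximal remaining entry goes first or second).

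Next I would impose the simsun condition on top of this. Since $\RS_n\subseteq\S_n(\wh{321})$ by~(\ref{eq:inclRS}), no double descents are allowed. The $\S_n(123,132)$ permutations are built so that the largest unused value is placed in one of the two leftmost available slots at each stage; I would track what these look like in one-line notation and check which avoid double descents. I expect that the simsun (no double descent) requirement, together with avoidance of $123$, collapses the $2^{n-1}$ possibilities down to just two surviving families. The cleanest route is probably to use the generating-tree picture from Proposition~\ref{prop:RS123}: there I would restrict the already-small tree $\RS(123)$ to its $132$-avoiding nodes. From Figure~\ref{fig:RS123}, the six permutations in $\RS_n(123)$ for large $n$ are explicit, and I would simply test each for a $132$ occurrence, keeping those that avoid it.

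Concretely, the sharpest approach is to start from the fact that $|\RS_n(123)|=6$ for $n\ge4$ (Proposition~\ref{prop:RS123}) and its accompanying generating tree, identify the six permutations explicitly (they are the descendants shown in Figure~\ref{fig:RS123}, each having a unique fertile position), and then intersect with the $132$-avoidance condition. Since the list is finite and stable in $n$, I would exhibit the two $132$-avoiding members directly, for instance the permutations of the form $n\,(n-1)\cdots$ built to avoid both patterns. The key step is to verify that exactly two of the six $123$-avoiding simsun permutations also avoid $132$, and that this count is independent of $n$ once $n\ge4$.

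The main obstacle I anticipate is bookkeeping rather than conceptual: correctly pinning down the explicit one-line forms of the six permutations in $\RS_n(123)$ as functions of $n$ (the figure shows only small cases, so I would need to describe how each grows under the unique insertion rule of Proposition~\ref{prop:RS123}), and then carefully checking the $132$ condition for each. Once the six shapes are written down, the verification is immediate, so the real work is ensuring the insertion analysis from the $\RS(123)$ proof is transcribed without error.
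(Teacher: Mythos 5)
Your final plan---restrict the generating tree of Proposition~\ref{prop:RS123} to its $132$-avoiding nodes, find that two survive at level $4$, and check that the unique insertion of $n+1$ (at the leftmost position or just after $\pi_1=n$) never creates a $132$---is exactly the paper's argument, and it is correct. The only difference is cosmetic: the paper argues stability abstractly from the location of the insertion rather than writing out the six permutations explicitly, which spares you the bookkeeping you were worried about.
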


\begin{proof} The first four levels of the subtree of $\T_{123}$ containing only $132$-avoiding permutations are drawn in Figure~\ref{fig:RS123132} (left). Starting at the fourth level, each permutation has one fertility position
right before the first ascent pair, as in the proof of Proposition~\ref{prop:RS123}. This is either the leftmost position or the position right after $\pi_1=n$, so insertion of $n+1$ in this positions
never creates an occurrence of $132$.
\end{proof}

\begin{figure}[htb]
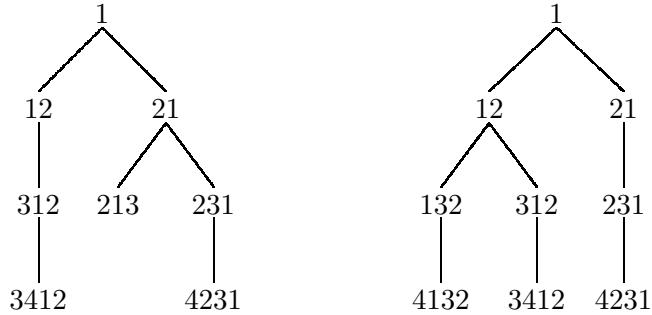

\begin{center}\synttree
[$1$ [$12$ [$312$ [$3412$]]] [$21$ [$213$] [$231$ [$4231$]]]] \hspace{2cm}
\synttree
[$1$ [$12$ [ $132$ [$4132$]] [$312$ [$3412$] ]] [$21$ [$231$ [$4231$]]]]
\end{center}
\caption{The first four levels of the generating trees for $\RS(123,132)$ (left) and $\RS(123,213)$ (right). \label{fig:RS123132}}
\end{figure}

\begin{lemma} For $n\ge3$,
$$|\RS_n(123,213)|=3.$$
\end{lemma}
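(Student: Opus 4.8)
The plan is to adapt the generating-tree argument used for $\RS(123)$ and $\RS(123,132)$ to the doubly-restricted class $\RS(123,213)$, stabilizing the count at $3$ for $n\ge3$. I would first examine the tree $\T_{123}$ restricted to $213$-avoiding permutations directly for small $n$ (the figure on the right of Figure~\ref{fig:RS123132} already records the first four levels), confirming that at level $3$ there are exactly the three permutations $132$, $312$, $231$, and that at each subsequent level the count remains $3$. The substance is to show that each permutation in $\RS_n(123,213)$ has exactly one fertility position where inserting $n+1$ keeps the permutation in $\RS_{n+1}(123,213)$.

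First I would recall from the proof of Proposition~\ref{prop:RS123} that the $123$-avoidance constraint already forces the unique admissible insertion spot to be the position immediately preceding the first ascent of $\pi$: a fertility position cannot precede a descent (simsun) and cannot have an ascent to its left (else a $123$ is created), so it must be the spot just before the first ascent. The key remaining step is to verify that inserting $n+1$ at this distinguished spot never creates a $213$ pattern, and that no \emph{other} spot becomes admissible once we also forbid $213$. Inserting the new maximum $n+1$ cannot itself play the role of the ``$1$'' in a $213$ occurrence, so a new $213$ could only arise with $n+1$ as the ``$3$''; this requires a descent somewhere to the right of the insertion point, which I would argue is incompatible with $\pi$ being a $123$-avoiding simsun permutation with its first ascent at that location. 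Here I expect to lean on the explicit structure of $\RS_n(123,213)$ for $n\ge3$, namely that these permutations have a very rigid shape (one of three forms), which makes the check routine.

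The main obstacle is establishing that the count does not merely stay bounded but is \emph{exactly} $3$, i.e. that the three surviving branches never merge, split, or die. I would handle this by pinning down the precise form of the three permutations at each level: for $n\ge3$ one expects $\RS_n(123,213)$ to consist of three permutations whose one-line notations follow a predictable pattern (analogous to how $\RS_n(123,132)$ stabilizes at $2$), each with a single child. The cleanest route is to show each of the three permutations has exactly one fertility position and that the three children are again distinct and of the same three types, so the tree from level $3$ onward is a disjoint union of three infinite paths. Once this one-child-per-node and distinctness claim is verified, the result $|\RS_n(123,213)|=3$ for $n\ge3$ follows immediately by induction from the base case visible in Figure~\ref{fig:RS123132}.
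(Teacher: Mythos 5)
Your overall strategy coincides with the paper's: restrict the generating tree $\T_{123}$ to $213$-avoiding permutations, use the proof of Proposition~\ref{prop:RS123} to locate the unique fertility position immediately preceding the first ascent, and verify that inserting $n+1$ there never creates a $213$. The trouble is that this last verification --- the only step that actually requires an argument beyond Proposition~\ref{prop:RS123} --- is carried out in the wrong direction. Since $n+1$ is the maximum, in any new occurrence of $213$ it must play the role of the ``$3$'', which is the \emph{last} letter of that pattern; hence the ``$2$'' and the ``$1$'' would both have to lie to the \emph{left} of the inserted $n+1$ and form an inversion there. Your assertion that a new $213$ ``requires a descent somewhere to the right of the insertion point'' is backwards, and the incompatibility you then propose to establish is aimed at the wrong configuration. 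The correct, one-line reason --- which is essentially the entire content of the paper's proof --- is that every simsun permutation of length $n\ge3$ has an ascent among its first three entries, so the position immediately preceding the first ascent is one of the two leftmost positions; consequently at most one entry lies to the left of the inserted $n+1$, and no occurrence of $213$ having $n+1$ as its largest entry can arise.

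Your remaining concerns are unnecessary. Branches of a generating tree cannot merge (deleting $n+1$ recovers the parent), so once each node from level $4$ onward is known to have exactly one child inside $\RS(123,213)$, the level counts are constant; there is no need to exhibit ``three rigid forms'' of the permutations, and in fact the shapes alternate along each branch (e.g.\ $4132\to45132\to645132\to\cdots$) rather than following a single template. One small point of care: the uniqueness of the fertility position in Proposition~\ref{prop:RS123} is only asserted for $n\ge4$ (at $n=3$, for instance, $312$ has two fertility positions in $\T_{123}$), so the passage from level $3$ to level $4$ must be read off Figure~\ref{fig:RS123132}, as you and the paper both do, with the general argument taking over only from level $4$ on.
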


\begin{proof} The first four levels of the subtree of $\T_{123}$ containing only $213$-avoiding permutations are drawn in Figure~\ref{fig:RS123132} (right). Starting at the fourth level, each permutation has one fertility position
right before the first ascent pair. This is always one of the two leftmost positions, so the insertion never creates an occurrence of $213$.
\end{proof}

\begin{lemma} For $n\ge5$,
$$|\RS_n(123,231)|=|\RS_n(123,312)|=0.$$
\end{lemma}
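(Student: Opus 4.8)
The plan is to show that for sufficiently large $n$, no permutation can simultaneously be simsun, avoid $123$, and avoid $231$ (resp.\ $312$). I would leverage the structural description already obtained in Proposition~\ref{prop:RS123}: for $n\ge4$, there are exactly six permutations in $\RS_n(123)$, and the generating tree of $\RS(123)$ has the property that each node at level $n\ge4$ has a unique child. So the natural approach is to track these six permutations explicitly, determine their precise one-line form as $n$ grows, and then check directly that each one contains either a $231$ pattern or a $312$ pattern once $n$ is large enough.

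First I would identify the six members of $\RS_n(123)$ concretely. Since deleting $n$ from a permutation in $\RS_n(123)$ yields a permutation in $\RS_{n-1}(123)$, and by Proposition~\ref{prop:RS123} the insertion spot at each level is forced to be immediately before the first ascent, I can describe the six permutations by a stable pattern. Inspecting the fourth level of Figure~\ref{fig:RS123} (the permutations $4132$, $3412$, $3142$, $2143$, $2413$, $4231$) and iterating the forced insertion, I would argue that each of these six families stabilizes into a recognizable shape: each is essentially a short prefix followed by a long decreasing-then-structured tail, since a $123$-avoiding permutation has at most one ascent-free structure and the simsun condition forbids triple descents. The key point is that for $n\ge5$, every such permutation must contain at least three ``large'' entries arranged so as to force a forbidden pattern.

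The main computational step is the pattern check. For the $231$ case, I would show that each of the six permutations, for $n\ge5$, contains an occurrence of $231$: because avoiding both $123$ and $231$ is extremely restrictive (by Table~\ref{tab:1or2}, $|\RS_n(123,231)|$ drops to $0$), the tail of any $123$-avoiding simsun permutation of length $\ge5$ unavoidably has a medium-high-low configuration. Symmetrically, for the $312$ case I would produce a high-low-medium configuration. The cleanest way to organize this is to simply list the six permutations of $\RS_5(123)$ explicitly, verify that each contains $231$ (resp.\ $312$), and then invoke the unique-child property: since each permutation of length $n+1$ in $\RS(123)$ is obtained by inserting $n+1$ into one of the six length-$n$ permutations, and inserting a new maximal element cannot destroy an existing occurrence of $231$ or $312$, the property ``contains $231$'' (resp.\ ``contains $312$'') is inherited up the tree. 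Hence once it holds at level $5$ it holds at every higher level, giving $|\RS_n(123,231)|=|\RS_n(123,312)|=0$ for all $n\ge5$.

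The main obstacle I anticipate is being careful about the base case rather than the inductive step: the inductive propagation is essentially free, since inserting the largest entry $n+1$ into an existing permutation preserves every pattern occurrence among the smaller entries. The real work is verifying the base case cleanly, i.e.\ checking that all six permutations in $\RS_5(123)$ genuinely contain the offending pattern; I would want to enumerate $\RS_5(123)$ directly from Figure~\ref{fig:RS123} by applying one more level of the forced insertion to the six length-$4$ permutations, and then exhibit an explicit $231$ (resp.\ $312$) occurrence in each. One subtlety to watch is that I must confirm there is no permutation in $\RS_5(123)$ avoiding the pattern in question, which amounts to ruling out all six candidates, so I would present this as a short finite verification rather than trying to argue it structurally.
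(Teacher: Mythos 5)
Your proposal is correct, and it reaches the conclusion by a slightly different route than the paper. The paper restricts the generating tree $\T_{123}$ to the permutations that additionally avoid $231$ (resp.\ $312$), draws the resulting finite trees (Figure~\ref{fig:RS123231}), and observes that the nodes at level $4$ have no fertility positions, so the trees simply die out. You instead work in the full tree for $\RS(123)$: using the unique-child property from Proposition~\ref{prop:RS123} you compute the six permutations of $\RS_5(123)$ explicitly (they are $45132$, $53412$, $35142$, $25143$, $52413$, $45231$), verify that each contains an occurrence of $231$ and of $312$, and then propagate upward via the observation that inserting a new maximum never destroys an existing pattern occurrence. Both arguments reduce to a small finite check plus a monotonicity statement; yours does the check one level higher and with more permutations (six rather than the one or two surviving nodes in the restricted trees), but in exchange it yields the slightly stronger and cleaner statement that \emph{every} element of $\RS_n(123)$ with $n\ge5$ contains both $231$ and $312$, without having to recompute fertility positions inside a doubly restricted class. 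Two small cautions: your appeal to Table~\ref{tab:1or2} in the middle paragraph is circular (it cites the very result being proved) and should be dropped as motivation only; and the vaguer structural musings about the six permutations ``stabilizing into a recognizable shape'' are unnecessary once you commit, as you ultimately do, to the explicit enumeration of $\RS_5(123)$ as the base case.
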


\begin{proof} The subtrees of $\T_{123}$ containing only $231$-avoiding permutations and only $312$-avoiding permutations, respectively, are drawn in Figure~\ref{fig:RS123231}.
The permutations of length 4 have no fertility positions in either tree.
\end{proof}

\begin{figure}[htb]
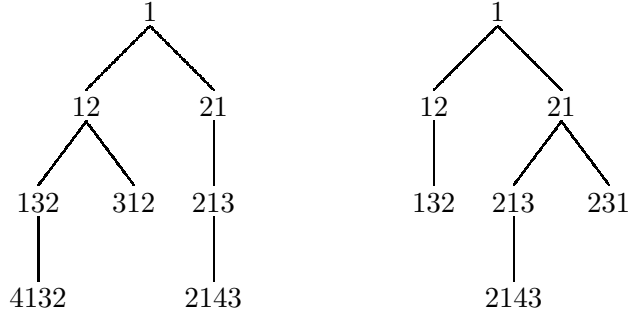

\begin{center}\synttree
[$1$ [$12$ [ $132$ [$4132$]] [$312$]] [$21$ [$213$ [$2143$]]]] \hspace{2cm}
\synttree
[$1$ [$12$ [$132$]] [$21$ [$213$ [$2143$]] [$231$]]]
\end{center}
\caption{The generating trees for $\RS(123,231)$ (left) and $\RS(123,312)$ (right). \label{fig:RS123231}}
\end{figure}

\begin{lemma}
$$|\RS_n(132,213)|=F_{n+1}.$$
\end{lemma}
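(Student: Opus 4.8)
The plan is to combine a structural description of $\S_n(132,213)$ with the equivalent formulation of the simsun condition recalled in the introduction, and then extract the count with a pair of functional equations in the spirit of the first proof of the $231$ case.

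First I would pin down the shape of a $\{132,213\}$-avoiding permutation. Writing $\pi=\sigma\,n\,\tau$ with $\sigma\gg\tau$ as in the standard $132$-decomposition, avoidance of $213$ forces $\sigma$ to be increasing, since any inversion $\sigma_i>\sigma_j$ would make $\sigma_i\sigma_j n$ an occurrence of $213$. Hence $\sigma$ consists of the largest entries below $n$ in increasing order, and iterating the decomposition on $\red(\tau)$ shows that $\pi$ is a concatenation $I_1I_2\cdots I_t$ of increasing runs in which each $I_j$ is a block of consecutive integers and the blocks have strictly decreasing value ranges. This identifies $\S_n(132,213)$ with the compositions $(c_1,\dots,c_t)$ of $n$, where $c_j=|I_j|$, recovering $|\S_n(132,213)|=2^{n-1}$ from~\cite{SS}.

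Next I would determine which of these are simsun, and I expect this to be the main obstacle, because it requires ruling out \emph{non-adjacent} violations of the simsun condition, not merely double descents. Using that $\pi$ fails to be simsun exactly when there are indices $a<b<c$ with $\pi_a>\pi_b>\pi_c$ and $\pi_i>\pi_a$ for all $a<i<b$ and $b<i<c$, I would prove that such a triple exists if and only if some \emph{interior} run $I_j$ (one with $2\le j\le t-1$) has length $1$. The forward implication is clear: a length-$1$ interior run $I_j=\{v\}$ lies between $\max I_{j-1}$ and $\min I_{j+1}$ and hence creates a double descent. For the converse, the decreasing value ranges of the runs force any such triple to have $\pi_b=\max I_q$ and $\pi_c=\min I_{q+1}$ for consecutive runs $I_q,I_{q+1}$, with no entry of $I_q$ allowed strictly between $a$ and $b$; this pins $I_q$ down to an interior run of length $1$. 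Thus $\RS_n(132,213)$ corresponds exactly to the compositions of $n$ all of whose interior parts are at least $2$.

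Finally I would count these compositions by generating functions. Let $R(z)=\sum_n|\RS_n(132,213)|z^n$, and let $R^\uparrow(z)$ be the analogous series for the subset $\RS^\uparrow_n(132,213)$ of permutations not beginning with a descent (equivalently $c_1\ge2$, or $n\le1$). A permutation either lies in $\RS^\uparrow_n$ or begins with a descent, in which case $\pi_1=n$ and removing it leaves, after reduction, a member of $\RS^\uparrow_{n-1}$; splitting off the first run inside $\RS^\uparrow$ gives the system
\[
R(z)=R^\uparrow(z)+z\bigl(R^\uparrow(z)-1\bigr),\qquad R^\uparrow(z)=1+\frac{z}{1-z}+\frac{z^2}{1-z}\bigl(R^\uparrow(z)-1\bigr),
\]
where the three summands in the second equation record the empty permutation, a single increasing run, and a first run of length at least $2$ followed by a nonempty tail in $\RS^\uparrow$. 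Solving gives $R^\uparrow(z)=\frac{1-z^2}{1-z-z^2}$ and then $R(z)=\frac{1}{1-z-z^2}=\sum_{n\ge0}F_{n+1}z^n$, which is the desired formula $|\RS_n(132,213)|=F_{n+1}$.
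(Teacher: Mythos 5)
Your proof is correct, but it takes a genuinely different route from the paper. The paper stays inside the generating-tree framework: it reuses the fertility analysis from the first proof of Proposition 4.1 ($\T_{213}$), observes that only the leftmost position and the position just after $n$ can receive $n+1$ without creating a $132$, and reads off the succession rule $(1)\to(2)$, $(2)\to(1)(2)$, which is a standard Fibonacci tree. You instead give an explicit structural characterization: $\S_n(132,213)$ consists of the ``decreasing-layered'' permutations encoded by compositions of $n$, and such a permutation is simsun exactly when every interior part is at least $2$; the count then follows from a short generating-function computation (I checked the system, and indeed $R^\uparrow(z)=\frac{1-z^2}{1-z-z^2}$ and $R(z)=\frac{1}{1-z-z^2}$). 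The one step that genuinely needs care is the converse of your characterization --- that a non-simsun layered permutation must have an interior singleton block --- and your outline of it is sound: the conditions $\pi_i>\pi_a$ force $\pi_b$ to be simultaneously the first and last entry of its block and force $c=b+1$, pinning that block down to an interior run of length $1$. The paper's argument is shorter because it leans on machinery already built; yours buys an explicit description of $\RS_n(132,213)$ as a classical Fibonacci family (compositions with interior parts $\ge 2$), which makes the answer transparent and would support refinements by statistics such as the number of descents (the number of parts minus one).
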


\begin{proof}
Consider the subtree of $\T_{213}$ containing only $132$-avoiding permutations.
In the first proof of Proposition~\ref{prop:213} we saw that the first ascending run of $\pi\in\RS_n(213)$ ends in $n$. Among the fertility positions mentioned in that proof,
only the leftmost position (provided $\pi_1\neq n$ or $n=1$) and the position immediately to the right of $n$ produce a $132$-avoiding permutation when $n+1$ is inserted.
It follows that for $n\ge2$, in this subtree $\pi\in\RS_n(213)$ has one fertility position if $\pi_1=n$ and two fertility positions otherwise. Labeling the nodes with the number of fertility positions,
the succession rule is
\bea\nn&&(1)\longrightarrow(2),\\
\nn&&(2)\longrightarrow(1)(2),\eea
with root $(1)$ corresponding to the empty permutation. It is well-known (see, for example~\cite{W}) that the number of nodes at each level of a tree that follows this succession rule is given by the Fibonacci numbers.
\end{proof}

\begin{lemma}
$$|\RS_n(132,231)|=n.$$
\end{lemma}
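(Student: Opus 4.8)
The plan is to first determine the structure of $\S_n(132,231)$ and then intersect it with the simsun condition. I would begin by observing that a permutation avoids both $132$ and $231$ precisely when it has no three indices $i<j<k$ with $\pi_i<\pi_j$ and $\pi_k<\pi_j$: such a triple is an occurrence of $132$ when $\pi_i<\pi_k$ and of $231$ when $\pi_i>\pi_k$, and conversely every occurrence of either pattern has its middle entry largest. In other words, no entry may have a smaller entry both to its left and to its right. Applying this to the largest entry $n$ forces it into the first or last position, and removing it and inducting shows that $\S_n(132,231)$ consists exactly of the \emph{valley permutations}, those of the form $\pi_1>\pi_2>\dots>\pi_m<\pi_{m+1}<\dots<\pi_n$ (decreasing down to the global minimum $1$, then increasing); there are $2^{n-1}$ of these, matching the known count.

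Next I would impose the simsun condition on a valley permutation $\pi$. Since the decreasing prefix $\pi_1>\pi_2>\dots>\pi_m$ is a genuine decreasing run of the full permutation, it is itself a double descent whenever $m\ge3$; as $\pi$ restricted to $\{1,\dots,n\}$ is $\pi$ itself, this already forces $m\le2$. Conversely, when $m\le2$ the permutation has the very rigid form $j\,1\,2\cdots(j-1)(j+1)\cdots n$: a single entry $j=\pi_1$ followed by the increasing arrangement of the remaining values (with $j=1$ giving the identity). I would then check directly that every such permutation is simsun: restricting to $\{1,\dots,k\}$ either deletes $j$ (leaving an increasing word) or keeps $j$ at the front (leaving $j$ followed by an increasing word), so no restriction has a double descent.

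Finally I would count: the permutations $j\,1\,2\cdots(j-1)(j+1)\cdots n$ are distinct for distinct $j$ and range over $j=1,\dots,n$, giving exactly $n$ simsun permutations in $\RS_n(132,231)$.

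The main obstacle is getting the structural characterization of $\S_n(132,231)$ cleanly and being careful that the simsun condition is verified at all levels $\{1,\dots,k\}$, not merely as the absence of a double descent in $\pi$ itself; the induction placing $n$ at an end handles the former, and the extremely simple shape of the surviving permutations makes the latter immediate.
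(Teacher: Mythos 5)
Your proof is correct and arrives at exactly the same characterization as the paper's: the paper argues locally that any descent at a position $i\ge 2$ forces either a double descent or an occurrence of $132$ or $231$, so $\pi_2<\pi_3<\dots<\pi_n$, leaving the $n$ permutations $j\,1\,2\cdots(j-1)(j+1)\cdots n$. Your detour through the full ``valley'' characterization of $\S_n(132,231)$ is sound but proves more than is needed; otherwise the argument is essentially the same.
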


\begin{proof}
If $\pi\in\RS_n(132,231)$, then the only possible $i$ with $\pi_i>\pi_{i+1}$ is $i=1$. Otherwise, $\pi_{i-1}\pi_i\pi_{i+1}$ would be a double descent or an occurrence of $231$ or $132$.
On the other hand, it is clear that the $n$ permutations satisfying $\pi_2<\pi_3<\dots<\pi_n$ are in $\RS_n(132,231)$.
\end{proof}

\begin{lemma}
$$|\RS_n(213,231)|=F_{n+1}.$$
\end{lemma}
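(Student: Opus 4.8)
The plan is to describe $\S_n(213,231)$ explicitly and then single out the simsun members. First I would establish the standard structural fact that a permutation lies in $\S_n(213,231)$ if and only if, reading from left to right, every entry is either the smallest or the largest among itself and all entries to its right; equivalently, at each step the set of not-yet-placed entries is an integer interval and $\pi_i$ is one of its two endpoints. The reason is that in both patterns $213$ and $231$ the leftmost letter plays the role of the median, so $\pi$ contains one of these patterns exactly when some entry has both a smaller and a larger entry to its right. Consequently each $\pi\in\S_n(213,231)$ is encoded by a word $c_1c_2\cdots c_{n-1}\in\{m,M\}^{n-1}$ recording at each step whether $\pi_i$ is the current minimum ($m$) or maximum ($M$) of the remaining entries, the final entry being forced.

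Next I would record that the descents of $\pi$ occur exactly at the positions $i$ with $c_i=M$: if $\pi_i$ is the current maximum then $\pi_{i+1}$ is smaller, and if $\pi_i$ is the current minimum then $\pi_{i+1}$ is larger. In particular $\pi$ has a double descent precisely when the word $c_1\cdots c_{n-1}$ contains two consecutive $M$'s. The heart of the argument is then the claim that, within this class, $\pi$ is simsun if and only if it has no double descent, i.e.\ if and only if its code has no factor $MM$. One direction is immediate from the inclusion $\RS_n\subseteq\S_n(\wh{321})$ in~(\ref{eq:inclRS}), since a double descent is an occurrence of $\wh{321}$.

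The main obstacle is the converse: showing that, for these particular permutations, the mere absence of a double descent already forces the full simsun condition. I would argue by contradiction using the characterization of simsun recalled in the introduction. If $\pi$ is not simsun, pick $a<b<c$ with $\pi_a>\pi_b>\pi_c$ and all entries strictly between them (in both gaps) larger than $\pi_a$. Since $a<b$, we have $\pi_{b-1}\ge\pi_a>\pi_b$, so position $b-1$ is a descent. If $c=b+1$, then position $b$ is also a descent, producing a double descent and hence a factor $MM$, a contradiction. Otherwise $b+1<c$, so $\pi_{b+1}>\pi_a>\pi_b$; this forces $\pi_b$ to be the minimum of the entries weakly to its right (it cannot be the maximum, as $\pi_{b+1}$ is larger), and then every entry after position $b$ exceeds $\pi_b$, contradicting $\pi_c<\pi_b$. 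This rules out the case $b+1<c$ and completes the equivalence.

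Finally, the enumeration is routine: the simsun members of $\S_n(213,231)$ are in bijection with the words in $\{m,M\}^{n-1}$ having no two consecutive $M$'s, and the number of such words of length $n-1$ is the familiar Fibonacci count $F_{n+1}$ (with the convention $\sum_n F_nz^n=z/(1-z-z^2)$ from Table~\ref{tab:seq}). Alternatively, one can package the same analysis as a generating tree: labelling each node by its number of fertility positions, one obtains the succession rule $(1)\longrightarrow(2)$, $(2)\longrightarrow(1)(2)$, exactly as in the lemma for $\RS_n(132,213)$, which again yields the Fibonacci numbers.
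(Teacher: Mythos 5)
Your main argument is correct, and it takes a genuinely different route from the paper. The paper proves the recursive decomposition directly: every $\pi\in\RS_n(213,231)$ is of the form $1\sigma$ or $n1\tau$ with $\red(\sigma)\in\RS_{n-1}(213,231)$ and $\red(\tau)\in\RS_{n-2}(213,231)$, which immediately gives the Fibonacci recurrence. You instead exploit the global structure of $\S_n(213,231)$ (each entry is the min or max of the suffix it starts), encode such permutations as words over $\{m,M\}$, and prove the clean intermediate statement that within this class the simsun condition degenerates to the absence of a double descent, i.e.\ to the absence of the factor $MM$; the count of such words is then $F_{n+1}$. Your proof of that intermediate equivalence is sound: the case $c=b+1$ yields a literal double descent, and the case $b+1<c$ forces $\pi_b$ to be a suffix minimum, contradicting $\pi_c<\pi_b$. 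The two approaches are of course closely related --- a word with no $MM$ begins with $m$ or with $Mm$, which is exactly the paper's decomposition --- but your version buys an explicit bijection with Fibonacci words and isolates a reusable characterization of simsun inside this permutation class, at the cost of first establishing the standard description of $\S_n(213,231)$.

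One caveat about your closing ``alternatively'' remark: the generating tree for $\RS(213,231)$ obtained by inserting $n+1$ (the subtree of $\T$ as in the paper) does \emph{not} follow the succession rule $(1)\to(2)$, $(2)\to(1)(2)$. For instance, $123$ has four fertile positions (its children are $4123$, $1423$, $1243$, $1234$), while $312$ has none; the level sizes are still Fibonacci, but the node labels are not confined to $\{1,2\}$. The two-label rule does apply to the tree that grows your $\{m,M\}$-codes by appending a letter on the right (equivalently, extending the permutation at its right end), so the remark can be salvaged, but as stated --- with ``fertility positions'' in the sense used elsewhere in the paper --- it is incorrect. Since this is offered only as an alternative packaging, it does not affect the validity of your proof.
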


\begin{proof}
We show that any $\pi\in\RS_n(213,231)$ with $n\ge1$ can be written as $\pi=1\sigma$ or as $\pi=n1\tau$, where $\rho(\sigma)$ and $\rho(\tau)$ are arbitrary permutations in $\RS_{n-1}(213,231)$ and
$\RS_{n-2}(213,231)$ respectively.
To see this, note that if $1<\pi_1<n$, then $\pi_1$ would be the first element of an occurrence of $213$ or $231$ in $\pi$.
If $\pi_1=n\ge3$, then $\pi_2\pi_3\dots\pi_n\in\RS_{n-1}(213,231)$, and applying the same argument we see that $\pi_2=1$ or $\pi_2=n-1$, but in the second case $\pi_1\pi_2\pi_3$ would be a double descent.
Finally, it is clear that given any $\sigma\in\RS_{n-1}(213,231)$, adding one to its entries and inserting a 1 at the beginning produces a permutation in $\RS_n(213,231)$, and inserting $n+1$ at the beginning
of the resulting permutation produces an element of $\RS_{n+1}(213,231)$.
The result follows from this recursive construction and the fact that $|\RS_0(213,231)|=|\RS_1(213,231)|=1$.
\end{proof}

\begin{lemma}
\bea\nn&|\RS_n(132,312)|=|\RS_n(213,312)|=n,&\\
\nn&|\RS_n(231,312)|=F_{n+1}.&
\eea
\end{lemma}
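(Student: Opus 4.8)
The final lemma to prove asserts three enumeration formulas:
\[
|\RS_n(132,312)|=n,\quad |\RS_n(213,312)|=n,\quad |\RS_n(231,312)|=F_{n+1}.
\]The plan is to treat the two $n$-valued cases by first describing the underlying doubly-avoiding classes structurally and then cutting them down with the simsun condition, and to handle the Fibonacci case by a recursive decomposition anchored at the largest entry. Throughout, the decisive tool is the sandwich $\S_n(\wh{32}1)\subseteq\RS_n\subseteq\S_n(\wh{321})$ from~(\ref{eq:inclRS}): its right inclusion provides the necessary no-double-descent constraint that prunes each class, and its left inclusion certifies that the surviving candidates are genuinely simsun.

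For $|\RS_n(132,312)|$, I would first observe that $\pi\in\S_n(132,312)$ precisely when every entry is either a left-to-right minimum or a right-to-left minimum: in both forbidden patterns the middle entry is the largest, so a triple $i<j<k$ with $\pi_i<\pi_j$ and $\pi_k<\pi_j$ is exactly what is prohibited. This forces the valley shape $\pi_1>\pi_2>\dots>\pi_p=1<\pi_{p+1}<\dots<\pi_n$. Since $\RS_n\subseteq\S_n(\wh{321})$ has no double descent, the decreasing prefix can have length at most $2$, leaving the identity together with the $n-1$ permutations $m\,1\,\tau$ (for $m\in\{2,\dots,n\}$, with $\tau$ the increasing arrangement of the remaining entries), for a total of $n$. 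Each such permutation has its unique descent $m>1$ immediately followed by the global minimum, so it contains no $\wh{32}1$ and is therefore simsun by~(\ref{eq:inclRS}). The count $|\RS_n(213,312)|=n$ then follows from the mirror argument: $\S_n(213,312)$ consists of the permutations in which every entry is a left-to-right or right-to-left \emph{maximum}, i.e.\ the mountain shape with peak $n$; here simsun forces the decreasing suffix to have length at most $1$, again giving exactly $n$ survivors, each containing no $\wh{32}1$ because its only descent sits at the very end.

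For $|\RS_n(231,312)|$, I would locate the maximum, say $\pi_m=n$. Using $n$ as the large middle entry of a putative $231$ shows that every entry left of $n$ is smaller than every entry right of $n$, and using $n$ as the large first entry of a putative $312$ shows that the suffix after $n$ is decreasing; hence
\[
\pi=\alpha\, n\,(n-1)(n-2)\cdots m,
\]
where $\alpha$ is a $\{231,312\}$-avoiding permutation of $\{1,\dots,m-1\}$. The block $n\,(n-1)\cdots m$ is a decreasing run of length $n-m+1$, so the no-double-descent condition forces $m\in\{n-1,n\}$. Deleting the trailing maximal block (just $n$ when $m=n$, and $n\,(n-1)$ when $m=n-1$) is then a bijection onto $\RS_{n-1}(231,312)$ and $\RS_{n-2}(231,312)$ respectively, since appending a largest element, or a largest element followed by the next-largest, creates neither a $231$, a $312$, nor a double descent in any restriction. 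This yields the recurrence $|\RS_n(231,312)|=|\RS_{n-1}(231,312)|+|\RS_{n-2}(231,312)|$ with initial values $1$ and $2$, which gives $F_{n+1}$.

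The main obstacle I anticipate is bookkeeping rather than a single hard idea. The two delicate points are pinning down the doubly-avoiding classes exactly and, more importantly, verifying that the surviving permutations are truly simsun and not merely free of one double descent; this is where I rely on the left inclusion of~(\ref{eq:inclRS}), noting that in every case the descents sit either just above the minimum or at the very end and so cannot be extended to a $\wh{32}1$. The only step demanding genuine care is checking that the recursive ``append the maximum'' operations in the $\{231,312\}$ case preserve avoidance of both patterns and the simsun property simultaneously, which I would verify by inspecting restrictions to $\{1,\dots,k\}$ for $k\in\{n-1,n\}$.
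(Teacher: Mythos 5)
Your treatments of $|\RS_n(213,312)|=n$ and $|\RS_n(231,312)|=F_{n+1}$ are correct, but the $\{132,312\}$ case contains a genuine error. You characterize $\S_n(132,312)$ as the permutations in which every entry is a left-to-right or right-to-left minimum, on the grounds that ``in both forbidden patterns the middle entry is the largest.'' That is true of $132$ but false of $312$, whose middle entry is the \emph{smallest}; the pair whose simultaneous avoidance yields the valley shape $\pi_1>\dots>\pi_p=1<\pi_{p+1}<\dots<\pi_n$ is $\{132,231\}$, not $\{132,312\}$. (The correct description of $\S_n(132,312)$ is that every entry is a left-to-right minimum or a left-to-right maximum: in both patterns it is the \emph{last} position that carries the middle \emph{value}.) Consequently the $n$ permutations you exhibit are not the right ones: for $m\ge3$ the permutation $m\,1\,\tau$ begins with $m,1,2$, which is an occurrence of $312$. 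For $n=3$ your list is $\{123,213,312\}$, whereas $\RS_3(132,312)=\{123,213,231\}$. So although the count $n$ is correct, what your argument actually proves is $|\RS_n(132,231)|=n$ (a different lemma of the paper); the $\{132,312\}$ case would have to be redone with the correct class, namely by showing that among permutations each of whose entries is a new prefix-minimum or prefix-maximum, the simsun condition leaves exactly $n$ survivors.

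For comparison, the paper disposes of all three identities in one line: by Proposition~\ref{prop:312gen}, $\RS_n(312)=\S_n(312,321)$, hence $\RS_n(\sigma,312)=\S_n(\sigma,312,321)$ for each $\sigma$, and these triple-avoidance classes were already enumerated by Simion and Schmidt. Your direct arguments for the other two cases --- the mountain-shape analysis for $\{213,312\}$ and the decomposition $\pi=\alpha\,n\,(n-1)\cdots m$ with the double-descent condition forcing $m\in\{n-1,n\}$ and yielding the Fibonacci recurrence --- are sound and self-contained alternatives, but the first third of the lemma remains unproved as written.
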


\begin{proof}
By Proposition~\ref{prop:312gen}, we have that
\bea\nn &&\RS_n(132,312)=\S_n(132,312,321),\\
\nn &&\RS_n(213,312)=\S_n(213,312,321),\\
\nn &&\RS_n(231,312)=\S_n(231,312,321).
\eea
These sets have been enumerated in~\cite{SS}.
\end{proof}

In all the cases of the form $\RS_n(\sigma,321)$ where $\sigma$ is a pattern of length 3, the condition of avoiding $321$ overrides the simsun restrictions. Thus, $\RS_n(\sigma,321)=\S_n(\sigma,321)$, and
one can refer to~\cite{SS}.

We end with a curious fact that can be obtained from Theorem~\ref{th:rs} and Tables~\ref{tab:1or2} and~\ref{tab:3up}. Using inclusion-exclusion, we see that for $n\ge5$, the number of simsun permutations in $\S_n$ which contain all six
patterns of length $3$ is
$$E_{n+1}-C_n-2M_n-S_n+2F_{n+1}+2^{n-1}+n^2-3n-1.$$
The first seven of these numbers, corresponding to $5\le n\le 11$, are $1$, $76$, $753$, $5910$, $43985$, $332401$, $2631499$. The term $1$ counts the permutation $41352$.

\end{document}